\def\Box{\vcenter{\vbox{\hrule\hbox{\vrule
     \vbox to 8.8pt{\hbox to 10pt{}\vfill}\vrule}\hrule}}}
\newtheorem{theorem}{Theorem}[section]
\newtheorem{lemma}[theorem]{Lemma}
\numberwithin{equation}{section}
\newtheorem{hypothesis}[theorem]{Hypothesis}
\definecolor{Purple}{rgb}{0.5,0,0.5}
\begin{document}

\newcommand{\stopthm}{\begin{flushright}
		\(\box \;\;\;\;\;\;\;\;\;\; \)
\end{flushright}}
\newcommand{\symfont}{\fam \mathfam}

\title{Flag-transitive $2$-$(v,k,\lambda)$ designs with $\lambda\ge (r,\lambda)^2$}

\date{}
\author[add1]{Junchi Zhang}\ead{junchizhang@zju.edu.cn}
\author[add2]{Jianbing Lu}\ead{jianbinglu@nudt.edu.cn}
\author[add1]{Meizi Ou\corref{cor1}}\ead{meiziou@zju.edu.cn}\cortext[cor1]{Corresponding author}
\address[add1]{School of Mathematical Sciences, Zhejiang University, Hangzhou 310058,   China}
\address[add2]{Department of Mathematics, National University of Defense Technology, Changsha 410073, China}
\begin{abstract}
    This paper is devoted to the study of $2$-designs with $\lambda\ge (r,\lambda)^2$ admitting a flag-transitive automorphism group $G$. The group $G$ has been shown to be point-primitive of either almost simple or affine type. In this paper, we classify the $2$-designs with $\lambda \geq (r,\lambda)^2>1$ admitting a flag-transitive almost simple automorphism group with socle $\mathrm{PSL}_n(q)$ or $\mathrm{PSU}_n(q)$ for $n \geq 3$.
    \newline
	
	\noindent\text{Keywords:} Flag-transitive; $2$-design; Almost simple group.
	
	\noindent\text{Mathematics Subject Classification (2020)}:  05B05 20B15 20B25
\end{abstract}	

\maketitle                                         

\section{Introduction}
A $2$-$(v,k,\lambda)$ design is an incidence structure $\mathcal{D} = (\mathcal{P}, \mathcal{B})$ with a set $\mathcal{P}$ of $v$ points and a set $\mathcal{B}$ of $b$ blocks, where each block contains exactly $k$ points, and every pair of distinct points is contained in exactly $\lambda$ blocks. Every point of $\mathcal{D}$ is contained in exactly $r=bk/v$ blocks, which is called the \emph{replication number} of $\mathcal{D}$. We say $\mathcal{D}$ is \emph{non-trivial} if $2<k<v-1$, and symmetric if $v=b$. The design $\mathcal{D}$ is called \emph{simple} if it does not have repeated blocks, and \emph{incomplete} if it is simple  and $b<\binom{v}{k}$. In this paper, we focus on non-trivial incomplete 2-designs.
A \emph{flag} of $\mathcal{D}$ is a point-block pair $(\alpha,B)$, where $\alpha$ is a point and $B$ is a block containing $\alpha$. An automorphism of $\mathcal{D}$ is a permutation of the point set which preserves the block set. The set of all automorphisms of $\mathcal{D}$ under  composition of permutations forms a group, denoted by $\mathrm{Aut}(\mathcal{D})$. For a subgroup $G$ of $\mathrm{Aut}(\mathcal{D})$, $G$ is said to be \emph{point-primitive} if it acts primitively on $\mathcal{P}$, and $G$ is \emph{flag-transitive} if it acts transitively on the set of flags of $\mathcal{D}$. 

The non-trivial $2$-$(v, k, \lambda)$ designs admitting a flag-transitive automorphism group $G$ have been widely studied by several authors. In $1990$, Buekenhout, Delandtsheer, Doyen, Kleidman, Liebeck and Saxl \cite{buekenhout1990linear} classified those $2$-designs with $\lambda=1$ apart from those with a one-dimensional affine automorphism group.  Since then, research attention has focused on the case $\lambda>1$. In $2025$, the classification of flag-transitive $2$-designs with $\lambda=2$ was completed in \cite{regueiro2005biplanes,regueiro2005primitivity,o2007biplanes,o2007classification,o2008biplanes,liang2018flag,liang2016flag2,devillers2021flag,alavi2024classical,liang2025classification}, except for the case where $G\leq\mathrm{A}\Gamma\mathrm{L}_1(q)$. Meanwhile, the flag-transitive $2$-designs with $(r,\lambda)=1$ and $G\nleq\mathrm{A}\Gamma\mathrm{L}_1(q)$ were completely classified in \cite{zhou2015flag2,zhu2015flag,alavi2020flag1,alavi2016symmetric,zhan2016flag,zhan2018non,zhang2019flag2,alavir,alavi2021correction,alavi2022classification,alavi2022finite,biliotti2017flag,biliotti2019nonsymmetric,biliotti2021classification}.  

In his famous book \cite{dembowski1968finite}, Dembowski provided some remarkable conditions, such as $\lambda>(r,\lambda)\left((r,\lambda)-1\right)$, under which any flag-transitive automorphism group of a $2$-design  acts point-primitively. In \cite{zhou2018flag}, Zhan and Zhou proved that if a non-trivial $2$-design  with $\lambda\geq (r,\lambda)^2$ admits a flag-transitive automorphism group $G$, then $G$ is of affine, almost simple or product type. Subsequent studies further explored specific classes of groups, such as sporadic simple groups \cite{zhan2017classification}, and alternating groups \cite{wang2020flag,wang2022alternating}. Notably, Li, Zhang and Zhou \cite{li2024flag} established that for $\lambda \geq (r,\lambda)^2$, no flag-transitive automorphism group of a non-trivial 2-design can be of product type, confining the group structure to affine or almost simple primitivity. Motivated by these developments, we  investigate the case where $G$ is almost simple with socle $\mathrm{PSL}_n(q)$ or $\mathrm{PSU}_n(q)$ for $n \geq 3$. Since the conditions $\lambda=1$ and $(r,\lambda)=1$ are both special cases of  $\lambda \geq (r,\lambda)^2$ and these two cases have been classified, throughout this paper we further assume that $(r,\lambda)>1$. Our main result is as follows:
 
\begin{theorem}\label{main}
    Let $\mathcal{D}$ be a non-trivial $2$-$(v,k,\lambda)$ design with $\lambda\geq (r,\lambda)^2>1$, admitting a flag-transitive automorphism group $G$ with socle $\mathrm{PSL}_n(q)$ or $\mathrm{PSU}_n(q)$, where $n\geq3$ and $q$ is a prime power. Let $H=G_\alpha$ be the stabilizer of a point $\alpha$ of $\mathcal{D}$. \\
    If $\mathrm{Soc}(G)=\mathrm{PSL}_n(q)$, then one of the following holds:
    \begin{enumerate}[(i)]
        \item $H$ is the stabilizer of a $1$-dimensional subspace in $G$;
        \item $H$ is the stabilizer of a $2$-dimensional subspace in $G$, where $n$ is  odd, and $\mathcal{D}$ is non-symmetric;
        \item $H\cong\mathrm{AGL}_1(7)$. Now $G\cong\mathrm{PSL}_3(2):2$, and $\mathcal{D}$ is either a $2$-$(8,4,6)$ or $2$-$(8,4,9)$ design; or $G\cong\mathrm{PSL}_3(2)$ and $\mathcal{D}$ is a $2$-$(8,4,9)$ design.
    \end{enumerate}
     If $\mathrm{Soc}(G)=\mathrm{PSU}_n(q)$, then $n=3$ and $H$ is the stabilizer of a totally singular $1$-dimensional subspace in $G$.
\end{theorem}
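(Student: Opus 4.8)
The plan is to reduce, via the standard flag-transitivity arithmetic, to a short list of \emph{large} maximal subgroups and then analyse these geometrically. As recalled in the introduction, the hypothesis $\lambda\ge(r,\lambda)^2$ forces $G$ to be point-primitive, so $H=G_\alpha$ is a maximal subgroup of $G$ and $v=|G:H|$. Set $d=(r,\lambda)$ and write $r=dr_1$, $\lambda=d\lambda_1$ with $(r_1,\lambda_1)=1$. The design identity $r(k-1)=\lambda(v-1)$ becomes $r_1(k-1)=\lambda_1(v-1)$, so $r_1\mid v-1$. Flag-transitivity makes $H$ transitive on the $r$ blocks through $\alpha$, whence $r\mid|H|$ and in particular $r_1\mid|H|$. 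By Fisher's inequality $k\le r$, so $\lambda(v-1)=r(k-1)<r^2$; combined with $\lambda\ge d^2$ this gives $d^2r_1^2=r^2>\lambda(v-1)\ge d^2(v-1)$, i.e. $r_1>\sqrt{v-1}$. Thus $r_1$ is a common divisor of $v-1$ and $|H|$ exceeding $\sqrt{v-1}$, so $\gcd(v-1,|H|)>\sqrt{v-1}$, and since $|H|^2\ge r^2>v-1$ we also obtain $|G|=v|H|<|H|^3+|H|$.

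The bound $|G|\lesssim|H|^3$ identifies $H$ as a large maximal subgroup of $G$, so I would invoke the classification of large maximal subgroups of $\mathrm{PSL}_n(q)$ and $\mathrm{PSU}_n(q)$ (Alavi--Burness) to produce a short explicit list of candidates, organised by Aschbacher class. The geometric subspace stabilisers (class $\mathcal{C}_1$) — the parabolic subgroups $P_m$ and the stabilisers of non-degenerate subspaces — dominate this list, together with a handful of small members of class $\mathcal{S}$ and of the remaining geometric classes. For each candidate $H$ one knows $v=|G:H|$ and $|H|$ explicitly, so the necessary condition $\gcd(v-1,|H|)>\sqrt{v-1}$ together with $r_1\mid(v-1,|H|)$, $r_1^2>v-1$, and $\lambda=d\lambda_1\ge d^2$ (with $\lambda_1(v-1)=r_1(k-1)$) becomes a concrete divisibility test.

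I expect most non-parabolic candidates to fall to this test: when $H$ lies in $\mathcal{C}_2,\dots,\mathcal{C}_8$ or $\mathcal{S}$, the largest divisor of $v-1$ that also divides $|H|$ is typically too small, so the ``large divisor'' condition $r_1>\sqrt{v-1}$ cannot be met, and the candidate is eliminated. The genuinely surviving families are the parabolic ones, where $v-1$ is a Gaussian-binomial-type expression in $q$ — for the $1$-space stabiliser $v=(q^n-1)/(q-1)$, for the $2$-space stabiliser $v=\tfrac{(q^n-1)(q^{n-1}-1)}{(q^2-1)(q-1)}$, and for the totally singular $1$-space stabiliser in $\mathrm{PSU}_3(q)$ one has $v=q^3+1$. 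For the $1$-space stabiliser (case (i)) and the $2$-space stabiliser with $n$ odd (case (ii)) I would verify that the numerical constraints are mutually consistent and record the structural features, in particular establishing non-symmetry in case (ii).

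The main obstacle is the fine analysis of these parabolic cases: one must determine exactly which $(k,\lambda)$ are compatible with all of flag-transitivity, integrality, and $\lambda\ge d^2$, which requires delicate manipulation of the $q$-power factorisations of $v-1$ and $|H|$ and a careful treatment of common factors. This is where the $2$-space stabiliser is ruled out for even $n$, the unitary case is forced down to $n=3$ with a totally singular $1$-space, and the borderline small configurations are isolated. The sporadic exception (iii), with $G$ related to $\mathrm{PSL}_3(2)\cong\mathrm{PSL}_2(7)$ and $H\cong\mathrm{AGL}_1(7)$, together with the specific $2$-$(8,4,6)$ and $2$-$(8,4,9)$ designs, emerges from the residual small-$q$ analysis and would be settled by direct computation.
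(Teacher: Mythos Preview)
Your overall architecture matches the paper's: point-primitivity, Aschbacher, the inequality $v<(r^*)^2$ (your $r_1>\sqrt{v-1}$), and the cubic bound $|G|<|H|^3$ to cut down to large maximal subgroups. But the divisibility test you intend to run, namely $r_1\mid\gcd(v-1,|H|)$ together with $\gcd(v-1,|H|)>\sqrt{v-1}$, is too coarse to finish the job. The paper relies throughout on two sharper tools you do not mention. First, flag-transitivity gives $r^*\mid s$ for \emph{every} non-trivial subdegree $s$ of $G$ on $G/H$, hence $r^*\mid\gcd(v-1,s)$; for the relevant $H$ one has explicit small subdegrees (e.g.\ $s=2q(q^{n-2}-1)/(q-1)$ for $P_{1,n-1}$, $s=q(q+1)(q^{n-2}-1)/(q-1)$ for $P_2$, and analogous formulas for $N_i$ and the $\mathcal{C}_2$ wreath products), and it is $\gcd(v-1,s)$, not $\gcd(v-1,|H|)$, that is small enough to contradict $v<(r^*)^2$. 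Second, when $p\mid v$ one has $(r^*,p)=1$, so $r^*\mid|\mathrm{Out}(X)|_{p'}\cdot|H_0|_{p'}$; this $p'$-part refinement yields the working inequality $|X|<|\mathrm{Out}(X)|_{p'}^2\,|H_0|\,|H_0|_{p'}^2$, which is what actually kills most of $\mathcal{C}_2$--$\mathcal{C}_8$ and $\mathcal{S}$.

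Concretely, your test would not eliminate $P_{1,n-1}$ or $P_2$ with $n$ even (where $|H|$ is large and shares many cyclotomic factors with $v-1$), nor would it handle the $N_1$ case for $\mathrm{PSU}_n(q)$ with $n\ge4$; in each of these the paper computes $\gcd(v-1,s)$ for a specific subdegree and only then gets a contradiction. Likewise the reductions $i\le2$ for $P_i$, $t\ge3$ for $\mathcal{C}_2$, etc., are imported from prior work that already uses the subdegree bound. Finally, the sporadic outcome (iii) does not arise from an $\mathcal{S}$-subgroup or a residual parabolic analysis as you suggest, but from the $\mathcal{C}_3$ family ($\mathrm{GL}_1(q^3)$ in $\mathrm{PSL}_3(2)$), where $H\cong 7{:}3$ inside $\mathrm{PSL}_3(2)$ gives $v=8$; you would need to track this case explicitly.
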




This paper is structured as follows. Section \ref{s2} establishes preliminary results on flag-transitive $2$-designs and almost simple groups with socle $\mathrm{PSL}_n(q)$ or $\mathrm{PSU}_n(q)$ for $n \geq 3$. The case where the socle is $\mathrm{PSL}_n(q)$ is resolved in Section \ref{s3}. Finally, Section \ref{s4} addresses the unitary case with socle $\mathrm{PSU}_n(q)$, completing the proof of Theorem \ref{main}.

\section{Preliminaries}\label{s2}

We begin with preliminary results on flag-transitive $2$-designs that will be used throughout this work. For notational convenience, we denote the greatest common divisor of integers $x$ and $y$ by $(x,y)$. Furthermore, we define the reduced parameters: $r^*=r/(r,\lambda)$ and $\lambda^*=\lambda/(r,\lambda)$.

\begin{lemma}\cite[Lemma 2.1]{devillers2021flag}
	\label{designs_property}
	Let $\mathcal{D}$ be a $2$-design with the parameters $(v,b,r,k,\lambda)$. Then the following statements hold:
	\begin{enumerate}[(i)]
		\item $r(k-1)=\lambda(v-1)$. In particular, $r^*$ divides $v-1$;
		\item $bk=vr$;
		\item $b\ge v$ and $r\ge k$;
		\item $\lambda v<r^2$. In particular, if $\lambda\ge (r,\lambda)^2$ then $v<(r^*)^2$.
	\end{enumerate}
\end{lemma}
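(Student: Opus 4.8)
The plan is to establish (i) and (ii) by elementary double counting, deduce (iii) from Fisher's inequality via the incidence matrix, and obtain (iv) by an algebraic rearrangement of the identity in (i).

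First I would prove the fundamental identity in (i). Fix a point $\alpha$ and count in two ways the set of pairs $(\beta,B)$ with $\beta\in\cP\setminus\{\alpha\}$ and $B$ a block containing both $\alpha$ and $\beta$. Counting by $\beta$ gives $\lambda(v-1)$, since each of the $v-1$ points $\beta$ determines a pair $\{\alpha,\beta\}$ lying in exactly $\lambda$ blocks; counting by $B$ gives $r(k-1)$, since $\alpha$ lies in $r$ blocks and each contributes $k-1$ further points. Hence $r(k-1)=\lambda(v-1)$. Writing $r=r^*(r,\lambda)$ and $\lambda=\lambda^*(r,\lambda)$ and cancelling $(r,\lambda)$ yields $r^*(k-1)=\lambda^*(v-1)$; since $(r^*,\lambda^*)=1$, it follows that $r^*\mid v-1$. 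For (ii), I would count the flags (incident point-block pairs): summing over blocks gives $bk$, summing over points gives $vr$, so $bk=vr$.

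The substantive step is (iii). Let $N$ be the $v\times b$ point-block incidence matrix. A direct count shows $NN^{\top}=(r-\lambda)I+\lambda J$, where $I$ and $J$ are the $v\times v$ identity and all-ones matrices, since the diagonal entries record the number of blocks through a point ($r$) and the off-diagonal entries record the number of blocks through two points ($\lambda$). Its determinant equals $(r-\lambda)^{v-1}\bigl(r+\lambda(v-1)\bigr)$, and by (i) we have $r+\lambda(v-1)=r+r(k-1)=rk$, so $\det(NN^{\top})=(r-\lambda)^{v-1}rk$. Because $\cD$ is non-trivial we have $v>k$, whence $r=\lambda(v-1)/(k-1)>\lambda$ and this determinant is nonzero. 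Thus $N$ has rank $v$, forcing $v\le b$. Combining with (ii), $vr=bk\ge vk$ gives $r\ge k$.

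Finally, for (iv) I would rearrange (i) as $\lambda v=r(k-1)+\lambda$, so that $r^2-\lambda v=r(r-k+1)-\lambda$. By (iii) we have $r-k+1\ge 1$, and the inequality $r>\lambda$ established above gives $r(r-k+1)-\lambda\ge r-\lambda>0$; hence $\lambda v<r^2$. For the refinement, the hypothesis $\lambda\ge(r,\lambda)^2$ means $\lambda/(r,\lambda)^2\ge 1$, so dividing $\lambda v<r^2$ by $(r,\lambda)^2$ yields $v\le \lambda v/(r,\lambda)^2<r^2/(r,\lambda)^2=(r^*)^2$. The only delicate point is ensuring the strict inequalities $r>\lambda$ and the non-triviality hypothesis $v>k$ are invoked correctly; everything else is routine bookkeeping, with the incidence-matrix determinant computation underlying Fisher's inequality being the one genuinely non-elementary ingredient.
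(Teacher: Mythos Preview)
Your proof is correct in all four parts; the double-counting for (i)--(ii), the incidence-matrix argument for Fisher's inequality in (iii), and the algebraic manipulation in (iv) are all standard and executed without error. Note, however, that the paper does not supply its own proof of this lemma: it is quoted verbatim from \cite[Lemma~2.1]{devillers2021flag} and used as a black box, so there is no ``paper's own proof'' to compare against---you have simply written out the classical argument that underlies the cited result.
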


For convenience, we will  employ freely the results of Lemma \ref{designs_property} hereafter, omitting further references.

\begin{lemma}\cite[2.3.7]{dembowski1968finite} \label{point-primitive}
	Let $\mathcal{D}$ be a $2$-$(v,k,\lambda)$ design with $\lambda \geq (r,\lambda)^2$. If $G \leq \mathrm{Aut}(\mathcal{D})$ acts flag-transitively on $\mathcal{D}$, then $G$ is point-primitive.
\end{lemma}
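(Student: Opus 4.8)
The plan is to argue by contradiction, using flag-transitivity to pin down the intersection pattern between the design blocks and a hypothetical system of imprimitivity, and then to collide the resulting divisibility relations with the bound $v<(r^*)^2$ furnished by Lemma~\ref{designs_property}(iv). I would first record two consequences of flag-transitivity: $G$ is point-transitive, and for each point $\alpha$ the stabiliser $G_\alpha$ is transitive on the $r$ blocks incident with $\alpha$. Assuming $G$ were point-imprimitive, I would fix a nontrivial $G$-invariant partition $\mathcal{P}=\Delta_1\cup\cdots\cup\Delta_c$ into classes of common size $d$, so that $cd=v$ with $1<d<v$ and $c>1$, and let $\Delta$ be the class containing $\alpha$.

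The crucial observation I would establish is that $m:=|B\cap\Delta|$ does not depend on the block $B$ through $\alpha$. Indeed, $G_\alpha$ fixes $\Delta$ setwise (the partition being $G$-invariant) and permutes the blocks through $\alpha$ transitively, so any $g\in G_\alpha$ carrying $B$ to $B'$ also carries $B\cap\Delta$ to $B'\cap\Delta$, forcing equal cardinalities. Counting in two ways the incident pairs $(B,\beta)$ with $\alpha,\beta\in B$ and $\beta\in\Delta$, and isolating the term $\beta=\alpha$ from the $d-1$ remaining points of $\Delta$ (each lying with $\alpha$ in exactly $\lambda$ blocks), I would obtain
\begin{equation*}
rm=r+(d-1)\lambda,\qquad\text{equivalently}\qquad r(m-1)=(d-1)\lambda.
\end{equation*}

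The remainder is arithmetic. Writing $r=(r,\lambda)r^*$ and $\lambda=(r,\lambda)\lambda^*$ with $(r^*,\lambda^*)=1$, the identity gives $r^*(m-1)=(d-1)\lambda^*$, hence $r^*\mid d-1$ by coprimality. Lemma~\ref{designs_property}(i) supplies $r^*\mid v-1$, and since $d\mid v$ the relation $v-1=d(c-1)+(d-1)$ then yields $r^*\mid c-1$ as well. Non-triviality forces $r^*\ge2$, so both $c-1$ and $d-1$, being positive multiples of $r^*$, are at least $r^*$; therefore
\begin{equation*}
v=cd\ge(r^*+1)^2>(r^*)^2.
\end{equation*}
This contradicts Lemma~\ref{designs_property}(iv), which under the hypothesis $\lambda\ge(r,\lambda)^2$ gives $v<(r^*)^2$. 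Hence no nontrivial invariant partition can exist, and $G$ is point-primitive.

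The step I expect to be the real crux is the constancy of $m$, since it is exactly the point where flag-transitivity (via $G_\alpha$ being transitive on the blocks through $\alpha$) combines with the $G$-invariance of the partition to produce the usable identity $r(m-1)=(d-1)\lambda$. Once that identity is in hand, the hypothesis $\lambda\ge(r,\lambda)^2$ enters only through the single inequality $v<(r^*)^2$, and the contradiction reduces to a short divisibility computation.
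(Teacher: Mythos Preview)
Your proof is correct. The paper does not supply its own proof of this lemma; it simply cites Dembowski~\cite[2.3.7]{dembowski1968finite}, so there is no in-paper argument to compare against. What you have written is essentially the classical argument: flag-transitivity forces the intersection number $m=|B\cap\Delta|$ to be constant, the double count gives $r(m-1)=(d-1)\lambda$, and then the divisibilities $r^*\mid d-1$ and $r^*\mid c-1$ combine with the bound $v<(r^*)^2$ from Lemma~\ref{designs_property}(iv) to yield a contradiction. Each step checks out, including the small point that $(r^*,d)=1$ (needed to pass from $r^*\mid d(c-1)$ to $r^*\mid c-1$), which follows from $r^*\mid d-1$.
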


For a positive integer $n$ and prime number $p$,  let $n_{p}$ and  $n_{p'}$ denote the $p$-part  and  $p'$-part of $n$ respectively, that is, $n_{p}=p^{t}$ where $p^{t}\mid n$ but $p^{t+1}\nmid n$, and $n_{p'}=n/n_{p}$. 

\begin{lemma}\cite[Lemma 2.2, Lemma 2.3]{devillers2021flag}
	\label{deisgns_divisibility} Let $\mathcal{D}$ be a $2$-$(v,k,\lambda)$ design, admitting a flag-transitive  point-primitive automorphism group $G$ with socle $X$, where $X$ is a finite simple group. Let $\alpha$ be a point of $\mathcal{D}$, $H=G_\alpha$ and $H_0=H\cap X.$
	Then  the following statements hold:
	\begin{enumerate}[(i)]
		\item $r$ divides both $|H|$ and $|\mathrm{Out}(X)|\cdot|H_0|$;
		\item $r\mid\lambda s$ for every non-trivial subdegree $s$ of $G$, and so $r^*\mid s$. In particular, if $\lambda \geq (r,\lambda)^2$, then   $v<(r^*)^2\le s^2$ and $v<(v-1,s)^2$;
		\item If a prime $p$ divides $v$, then $(r^*,p)=1$ and $r^*$ divides $|\mathrm{Out}(X)|_{p'}\cdot|H_0|_{p'}$;
        \item $\lambda|G|<|H|^3$.
	\end{enumerate}
\end{lemma}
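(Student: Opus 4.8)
The plan is to reduce every assertion to a single structural fact—flag-transitivity forces the point stabilizer $H=G_\alpha$ to act transitively on the set of $r$ blocks incident with $\alpha$—and then to extract the divisibility and inequality statements by elementary orbit counting together with the relations recorded in Lemma~\ref{designs_property}.

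First I would establish this key transitivity. Given two flags $(\alpha,B)$ and $(\alpha,B')$ sharing the point $\alpha$, flag-transitivity supplies $g\in G$ with $(\alpha,B)^g=(\alpha,B')$; then $g$ fixes $\alpha$, whence $g\in H$ and $B^g=B'$. Thus $H$ is transitive on the $r$ blocks through $\alpha$, and writing $r=|H|/|H_B|$ for a flag stabilizer $H_B$ gives $r\mid|H|$, the first half of (i). For the second half I would use that $X=\mathrm{Soc}(G)$ is nonabelian simple, so $C_G(X)=1$ and $G/X\hookrightarrow\mathrm{Out}(X)$; the second isomorphism theorem yields $H/H_0\cong HX/X\le G/X$, whence $|H:H_0|$ divides $|\mathrm{Out}(X)|$. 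Therefore $|H|=|H:H_0|\cdot|H_0|$ divides $|\mathrm{Out}(X)|\cdot|H_0|$, and combining with $r\mid|H|$ gives $r\mid|\mathrm{Out}(X)|\cdot|H_0|$.

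For (ii) I would run a double count. Let $\Delta$ be an $H$-orbit on $\mathcal{P}\setminus\{\alpha\}$ of length $s$, and count incident pairs $(\beta,B)$ with $\beta\in\Delta$ and $\alpha,\beta\in B$. Summing over $\beta$ gives $\lambda s$, since each pair $\{\alpha,\beta\}$ lies in exactly $\lambda$ blocks. Summing over the blocks $B\ni\alpha$ gives $\sum_{B\ni\alpha}|B\cap\Delta|$. The delicate point is that these intersection numbers are independent of $B$: since $\Delta$ is $H$-invariant, $|B^h\cap\Delta|=|B\cap\Delta|$ for all $h\in H$, and as $H$ is transitive on the blocks through $\alpha$ the numbers all coincide, say with common value $c$, so $\lambda s=rc$ and hence $r\mid\lambda s$. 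Dividing by $(r,\lambda)$ gives $r^*\mid\lambda^* s$, and $(r^*,\lambda^*)=1$ forces $r^*\mid s$. The ``in particular'' clause is then immediate: $\lambda\ge(r,\lambda)^2$ gives $v<(r^*)^2$ by Lemma~\ref{designs_property}(iv), while $r^*\mid s$ and $r^*\mid v-1$ by Lemma~\ref{designs_property}(i) give $r^*\le s$ and $r^*\le(v-1,s)$, so $v<(r^*)^2\le s^2$ and $v<(v-1,s)^2$.

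Parts (iii) and (iv) are arithmetic consequences. For (iii), if $p\mid v$ then $p\nmid v-1$ since consecutive integers are coprime; as $r^*\mid v-1$ this gives $(r^*,p)=1$, and combining $(r^*,p)=1$ with $r^*\mid r\mid|\mathrm{Out}(X)|\cdot|H_0|$ from (i) shows $r^*$ divides the $p'$-part $|\mathrm{Out}(X)|_{p'}\cdot|H_0|_{p'}$. For (iv), point-transitivity gives $|G|=v|H|$, and $r\mid|H|$ gives $r\le|H|$; feeding $\lambda v<r^2$ from Lemma~\ref{designs_property}(iv) into $\lambda|G|=\lambda v\,|H|<r^2|H|\le|H|^3$ closes the claim. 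The whole argument is routine once the block-transitivity of $H$ is in hand, so I expect the only genuinely substantive step to be the constancy of $|B\cap\Delta|$ across the blocks through $\alpha$ in the double count of (ii).
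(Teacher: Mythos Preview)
Your argument is correct in every part. Note, however, that the paper does not supply its own proof of this lemma: it is quoted from \cite[Lemmas 2.2 and 2.3]{devillers2021flag} and stated without proof, so there is nothing in the present paper to compare your approach against. Your derivation is the standard one and is essentially what appears in the cited source.
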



\begin{lemma}\cite{devillers2021flag}\cite{saxl2002finite}\label{some_subdgrees}
	Let $G$ be an almost simple group with socle $X= \mathrm{PSL}_n(q)$ or $\mathrm{PSU}_n(q)$ with $n \geq 3$. Suppose that $H$ is a maximal subgroup of $G$ not containing $X$. Then the action of $G$ on the cosets of $H$ has subdegrees dividing the numbers $s$ listed in the fourth column of Table \ref{degrees}.
\end{lemma}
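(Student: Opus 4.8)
The plan is to proceed by Aschbacher's classification of the maximal subgroups of an almost simple classical group: every such $H$ not containing $X$ lies in one of the geometric classes $\mathcal{C}_1,\dots,\mathcal{C}_8$ or in the class $\mathcal{S}$ of (almost simple) irreducible subgroups. Since the subdegrees of $G$ acting on $\Omega=G/H$ are exactly the lengths of the $H$-orbits on $\Omega$, and the orbit through a coset $Hg$ has length $|H:H\cap H^{g}|$, it suffices in each case to produce one element $g\notin H$ (equivalently, a second point $\beta\neq\alpha$) for which the stabilizer $H_\beta=H\cap H^{g}$ is large enough that the resulting index divides the value $s$ recorded in the fourth column of Table~\ref{degrees}. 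I would therefore treat the classes one at a time, working throughout with the natural module $V=\mathbb{F}_{q^{u}}^{\,n}$, where $u=1$ in the linear case and $u=2$ in the unitary case.

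First I would handle the subspace stabilizers in $\mathcal{C}_1$, where $\Omega$ is identified with the set of subspaces of a fixed dimension and (in the unitary case) a fixed isometry type. Two such subspaces lie in a common $H$-orbit precisely when they are equivalent under the stabilizer of a base subspace, an equivalence controlled by the dimension of their intersection together with the form induced on it. Choosing the relative position nearest to the base subspace singles out a small orbit whose length is a Gaussian-binomial expression that I would simplify to exhibit the claimed divisor $s$. For the remaining geometric classes — imprimitive decompositions $\mathcal{C}_2$, the field and subfield subgroups $\mathcal{C}_3,\mathcal{C}_5$, the tensor classes $\mathcal{C}_4,\mathcal{C}_7$, the extraspecial normalizers $\mathcal{C}_6$, and the classical subgroups $\mathcal{C}_8$ — I would exploit the explicit structure of $H$ to construct a convenient $g$, typically one fixing most of the defining configuration of $H$ (most summands of a decomposition, most tensor factors, a maximal subform, and so on), so that $H\cap H^{g}$ retains a large, explicitly describable subgroup and the index is forced to divide $s$.

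The main obstacle is the class $\mathcal{S}$, together with the extraspecial and tensor cases, where $\Omega$ carries no incidence-geometric model and the suborbits cannot be read off directly. Here I would fall back on quantitative data: the explicit determination of the subgroups and their orders, combined with the arithmetic constraint that every subdegree divides $|H|$, to certify a subdegree dividing $s$ without computing the full double-coset structure. Since all of these subdegree computations for socle $\mathrm{PSL}_n(q)$ and $\mathrm{PSU}_n(q)$ have already been carried out by Saxl \cite{saxl2002finite} and by Devillers, Liang, Praeger and Zhou \cite{devillers2021flag}, the most economical route is to invoke those results case by case and verify that each entry in the fourth column of Table~\ref{degrees} divides the subdegrees they produce; collating across all Aschbacher classes then yields the lemma.
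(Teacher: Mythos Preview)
The paper gives no proof of this lemma at all: it is stated purely as a citation of \cite{devillers2021flag} and \cite{saxl2002finite}, and Table~\ref{degrees} records only a handful of specific subgroup types (parabolics $P_i$ and $P_{i,n-i}$ and the $\mathcal{C}_2$-subgroup $\mathrm{GL}_m(q)\wr\mathrm{S}_2$ for $\mathrm{PSL}_n(q)$; the non-degenerate stabilizers $N_i$ and the $\mathcal{C}_2$-subgroup $\mathrm{GU}_1(q)\wr\mathrm{S}_n$ for $\mathrm{PSU}_n(q)$). The lemma is therefore asserting the existence of a small subdegree only for those listed $H$, not for an arbitrary maximal subgroup.

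Your plan misreads the scope of the statement. You propose to run through every Aschbacher class $\mathcal{C}_1,\dots,\mathcal{C}_8$ and $\mathcal{S}$, but the table you are asked to justify has no entries for $\mathcal{C}_3$--$\mathcal{C}_8$ or $\mathcal{S}$, so all of that work is superfluous. What is actually required is just the $\mathcal{C}_1$ and $\mathcal{C}_2$ orbit computations, and these are precisely the ones carried out in the cited references; the correct response here is simply to point to those sources, as the paper does. Two smaller slips: in your final sentence you say one should check that each $s$ ``divides the subdegrees they produce,'' which inverts the claim (the lemma asserts a subdegree dividing $s$); and the fourth author of \cite{devillers2021flag} is Xia, not Zhou.
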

\begin{table}[H]
	\caption{Some subdegrees of $X$ on the cosets of $H$ }
	\label{degrees}
	\centering
	\begin{tabular}{lllll}
		\toprule
		Class & $X$ & Type of $H$ & $s$ & conditions\\
		\midrule
		$\mathcal{C}_1$ & $\mathrm{PSL}_n(q)$  & $P_i$  & $q(q^i-1)(q^{n-i}-1)/(q-1)^2$  & $1<i\leq n/2$ \\
        $\mathcal{C}_1$ &  $\mathrm{PSL}_n(q)$ & $P_{i,n-i}$ &	$2q(q^{n-2i}-1)(q^i-1)/(q-1)^2$& $1\leq i\leq n/2$ \\
		$\mathcal{C}_2$ & $\mathrm{PSL}_n(q)$  & $\mathrm{GL}_m(q) \wr \mathrm{S}_2$ &  $4q^{2(m-1)}(q^m-1)^2/(q-1)^2$ &  $n=2m\ge4$ \\
		
		$\mathcal{C}_1$ & $\mathrm{PSU}_n(q)$  & $N_i$  & $\left(q^i-(-1)^i\right)\left(q^{n-i}-(-1)^{n-i}\right)$  & $n\geq 3$ \\
		$\mathcal{C}_2$ & $\mathrm{PSU}_n(q)$  & $\mathrm{GU}_1(q) \wr \mathrm{S}_n$  &  $n(n-1)(n-2) (q+1)^3/6 $  &  $n \geq 4, q \leq 3$ \\
		$\mathcal{C}_2$  & $\mathrm{PSU}_n(q)$  &  $\mathrm{GU}_1(q) \wr \mathrm{S}_n$  &  $n(n-1)(q+1)^2/2$  &  $n \geq 4, q > 3$ \\
		\bottomrule
	\end{tabular}
\end{table}


\begin{lemma}
	\label{lemma|X|<|Out|^2|H_0|....}
	Suppose that  $ \mathcal{D} $  is a  $2$-$ (v, k, \lambda) $ design admitting a flag-transitive automorphism group G with socle $X$, where $X$ is a finite simple group of Lie type in characteristic $p$. Let $\alpha$ be a point of $\mathcal{D}$, $H=G_{\alpha}$ and $H_0=H\cap X$. If $\lambda\ge (r,\lambda)^2$ and $p$ divides $v$, then $|G|<|H|\cdot|H|_{p'}^2$. In particular, we have
    \begin{equation}\label{inequality|X|<|Out(X)^2....|}
    |X|<|\mathrm{Out}(X)|_{p'}^2\cdot|H_0|\cdot|H_0|^2_{p'}.        
    \end{equation}
\end{lemma}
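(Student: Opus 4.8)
The plan is to reduce both inequalities to a single upper bound on $v$ and then feed in the divisibility facts already established. First I would record that, since $\lambda \ge (r,\lambda)^2$, Lemma \ref{point-primitive} makes $G$ point-primitive; as $G$ is almost simple, its socle $X$ is then transitive on points, and with $X_\alpha = X\cap G_\alpha = H_0$ we get
\[
v = |G|/|H| = |X|/|H_0|.
\]
Substituting $|G| = v|H|$ into the target $|G| < |H|\cdot|H|_{p'}^2$ shows it is equivalent to $v < |H|_{p'}^2$, and substituting $|X| = v|H_0|$ into \eqref{inequality|X|<|Out(X)^2....|} shows that inequality is equivalent to $v < \bigl(|\mathrm{Out}(X)|_{p'}\cdot|H_0|_{p'}\bigr)^2$. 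So everything comes down to bounding $v$ from above by an appropriate square.

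The central estimate is $v < (r^*)^2$, available from Lemma \ref{designs_property}(iv) (equivalently Lemma \ref{deisgns_divisibility}(ii)) precisely because $\lambda \ge (r,\lambda)^2$. It then suffices to bound $r^*$ by the two quantities $|\mathrm{Out}(X)|_{p'}\cdot|H_0|_{p'}$ and $|H|_{p'}$. For the first bound this is immediate: since $p\mid v$, Lemma \ref{deisgns_divisibility}(iii) gives $r^* \mid |\mathrm{Out}(X)|_{p'}\cdot|H_0|_{p'}$, whence $(r^*)^2 \le \bigl(|\mathrm{Out}(X)|_{p'}\cdot|H_0|_{p'}\bigr)^2$; combined with $v < (r^*)^2$ this is exactly the inequality equivalent to \eqref{inequality|X|<|Out(X)^2....|}, and multiplying through by $|H_0|$ recovers \eqref{inequality|X|<|Out(X)^2....|} itself.

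For the first inequality I would bound $r^*$ by $|H|_{p'}$ instead. By Lemma \ref{deisgns_divisibility}(i) we have $r \mid |H|$, and since $r^* = r/(r,\lambda)$ divides $r$, also $r^* \mid |H|$. Because $p\mid v$, Lemma \ref{deisgns_divisibility}(iii) gives $(r^*,p)=1$, so $r^*$ is a divisor of $|H|$ coprime to $p$; any such divisor divides the $p'$-part $|H|_{p'}$. Hence $(r^*)^2 \le |H|_{p'}^2$, and with $v < (r^*)^2$ we obtain $v < |H|_{p'}^2$, that is, $|G| = v|H| < |H|\cdot|H|_{p'}^2$, as required.

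Since the argument is merely an assembly of the cited lemmas, I do not expect a genuine obstacle here. The only points requiring a little care are the justification that $v = |X|/|H_0|$, which rests on the transitivity of the socle guaranteed by point-primitivity, and the elementary observation that a divisor of $|H|$ coprime to $p$ automatically divides $|H|_{p'}$.
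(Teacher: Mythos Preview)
Your proposal is correct and follows essentially the same route as the paper: both arguments rest on $(r^*,p)=1$ combined with $r^*\mid|H|$ to get $r^*\mid|H|_{p'}$, and then invoke $v<(r^*)^2$. The only cosmetic difference is that you obtain \eqref{inequality|X|<|Out(X)^2....|} directly from Lemma~\ref{deisgns_divisibility}(iii), whereas the paper deduces it from the first inequality via $|H|_{p'}\le|\mathrm{Out}(X)|_{p'}\cdot|H_0|_{p'}$.
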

\begin{proof}
	Note that $(r^*,p)=1$ since $r^*$ divides $v-1$. By Lemma \ref{deisgns_divisibility}(i), it follows that  $r^*$ divides $|H|_{p'}$. Then $v<(r^*)^2$ yields $|G|<|H|\cdot|H|^2_{p'}$. Furthermore, as $|G|/|H|=|X|/|H_0|$ and $|H|\leq |\mathrm{Out}(X)|\cdot|H_0|$,  we deduce that $|X|<|\mathrm{Out}(X)|^2_{p'}\cdot|H_0|\cdot|H_0|^2_{p'}$.
\end{proof}

\begin{lemma}
	\label{lem:2.4}
	Suppose that  $ \mathcal{D} $  is a  $2$-$(v, k, \lambda) $ design admitting a flag-transitive automorphism group $G$ with socle $X$, where $X$ is a finite simple group.
 	Let $\alpha$ and $\beta$ be distinct points of $\mathcal{D}$, and suppose $N\le G_{\alpha\beta}$. Then $r^*$ divides $|\mathrm{Out}(X)|\cdot|H_0|/|N|$.
 \end{lemma}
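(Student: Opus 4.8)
The plan is to realize the index $[H:G_{\alpha\beta}]$ as a non-trivial subdegree of $G$ in its action on $\mathcal{P}$, and then run the divisibility statement of Lemma~\ref{deisgns_divisibility}(ii) through it. Since $G$ is flag-transitive, it is transitive on $\mathcal{P}$, and the subdegrees of $G$ are exactly the lengths of the $H$-orbits on $\mathcal{P}$, where $H=G_\alpha$. As $\beta\neq\alpha$, the point $\beta$ lies in an $H$-orbit different from $\{\alpha\}$, so $s:=|\beta^{H}|=[H:H_\beta]=[H:G_{\alpha\beta}]$ is a non-trivial subdegree. By Lemma~\ref{deisgns_divisibility}(ii), every non-trivial subdegree is divisible by $r^*$, hence $r^*\mid [H:G_{\alpha\beta}]=|H|/|G_{\alpha\beta}|$.

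Next I would propagate this divisibility down to the desired quantity by elementary index bookkeeping. Because $N\le G_{\alpha\beta}\le H$, we may write $|G_{\alpha\beta}|=|N|\cdot c$ and $|H|=|G_{\alpha\beta}|\cdot s$ with $c,s$ positive integers, whence $|H|/|N|=s\cdot c$ and therefore $s\mid |H|/|N|$. Thus $r^*\mid |H|/|N|$. It remains only to replace $|H|$ by $|\mathrm{Out}(X)|\cdot|H_0|$: since $X=\mathrm{Soc}(G)$ is a non-abelian simple group, $X\le G\le\mathrm{Aut}(X)$, so $H/H_0=H/(H\cap X)\cong HX/X\le G/X\le\mathrm{Out}(X)$, giving $[H:H_0]\mid|\mathrm{Out}(X)|$. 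Consequently $|H|$ divides $|\mathrm{Out}(X)|\cdot|H_0|$, and dividing by $|N|$ shows $|H|/|N|$ divides $|\mathrm{Out}(X)|\cdot|H_0|/|N|$. Chaining the two divisibilities yields $r^*\mid|\mathrm{Out}(X)|\cdot|H_0|/|N|$, as required.

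The argument is short, and the one point that needs care—which I would flag as the only genuine subtlety—is that $N$ is assumed only to lie in $G_{\alpha\beta}$, not in $X$, so the ratio $|H_0|/|N|$ need not be an integer and one cannot argue directly inside $H_0$. Routing the divisibility through $|H|/|N|$ (which is an integer because $N\le H$) and only afterwards invoking $[H:H_0]\mid|\mathrm{Out}(X)|$ sidesteps this cleanly. A secondary check is that Lemma~\ref{deisgns_divisibility}(ii) genuinely applies here, i.e.\ that $G$ is point-primitive; this is supplied by Lemma~\ref{point-primitive} under the standing hypotheses, and the conclusion $r^*\mid s$ for non-trivial subdegrees $s$ is precisely the input we exploit.
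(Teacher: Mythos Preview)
Your argument is correct and is exactly the standard route the paper's one-line citation to \cite[Lemma~2.4]{devillers2021flag} points to: identify $s=[H:G_{\alpha\beta}]$ as a non-trivial subdegree, apply $r^*\mid s$, then use $N\le G_{\alpha\beta}$ together with $[H:H_0]\mid|\mathrm{Out}(X)|$ to reach the stated divisor. One small remark: Lemma~\ref{lem:2.4} as stated does not assume $\lambda\ge(r,\lambda)^2$, so your appeal to Lemma~\ref{point-primitive} is not strictly licensed; this is harmless, however, since the divisibility $r^*\mid s$ in Lemma~\ref{deisgns_divisibility}(ii) needs only flag-transitivity, not point-primitivity.
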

\begin{proof}
	The argument is analogous to \cite[Lemma 2.4]{devillers2021flag}.
\end{proof}

\begin{lemma}
	\label{some_inequalities}\cite[Lemma 4.1]{alavi2015large}
	Let $q$ be a prime power.
	\begin{enumerate}[(i)]
		\item If $a\geq 2$ then
		\[
		(1-q^{-1})^2\leq1 - q^{-1} - q^{-2} < \prod\limits_{j=1}^{a} (1 - q^{-j}) \leq (1 - q^{-1})(1 - q^{-2})
		\]
		\item If $a\geq 3$   then
		\[
		1 < (1 + q^{-1})(1 - q^{-2}) < \prod\limits_{j=1}^{a} \left(1 - (-q)^{-j}\right)\leq(1 + q^{-1})(1 - q^{-2})(1 + q^{-3}).
		\]
	\end{enumerate}
\end{lemma}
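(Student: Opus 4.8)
The plan is to treat the two parts separately. Throughout, write $P_a=\prod_{j=1}^a(1-q^{-j})$ for part (i) and $Q_a=\prod_{j=1}^a\bigl(1-(-q)^{-j}\bigr)$ for part (ii), and use only that $q\ge 2$ (as $q$ is a prime power). For part (i) there are three inequalities. The left one, $(1-q^{-1})^2\le 1-q^{-1}-q^{-2}$, reduces after expanding to $-q^{-1}(1-2q^{-1})\le 0$, which holds since $1-2q^{-1}\ge 0$ for $q\ge 2$, with equality exactly at $q=2$. The right inequality $P_a\le(1-q^{-1})(1-q^{-2})=P_2$ is immediate from monotonicity: for $a\ge 2$ every extra factor $1-q^{-j}$ with $j\ge 3$ lies in $(0,1)$, so $P_a\le P_2$, with equality iff $a=2$.

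The crux of part (i) is the strict lower bound $P_a>1-q^{-1}-q^{-2}$. Here I would first record $P_2=1-q^{-1}-q^{-2}+q^{-3}$ and write $P_a=P_2\prod_{j=3}^a(1-q^{-j})$ for $a\ge 3$. Applying the Weierstrass inequality $\prod(1-x_j)\ge 1-\sum x_j$ to the tail and summing the geometric series gives $\prod_{j=3}^a(1-q^{-j})>1-\tfrac{1}{q^2(q-1)}$. Combining, $P_a>P_2-\tfrac{P_2}{q^2(q-1)}$, and since $P_2=(1-q^{-1})(1-q^{-2})<1-q^{-1}$ one checks $\tfrac{P_2}{q^2(q-1)}<q^{-3}$, whence $P_a>P_2-q^{-3}=1-q^{-1}-q^{-2}$. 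The remaining case $a=2$ is the trivial $P_2>P_2-q^{-3}$.

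For part (ii) note that $1-(-q)^{-j}$ equals $1+q^{-j}$ for odd $j$ and $1-q^{-j}$ for even $j$, so the correction to $Q_a$ alternates in sign and $(Q_a)$ oscillates rather than being monotone. I would therefore analyse the two-step ratios $Q_{2m+2}/Q_{2m}=(1+q^{-(2m+1)})(1-q^{-(2m+2)})$ and $Q_{2m+1}/Q_{2m-1}=(1-q^{-2m})(1+q^{-(2m+1)})$; expanding and using $q\ge 2$ shows the first exceeds $1$ and the second is below $1$. Hence the even-indexed partial products strictly increase while the odd-indexed ones (from index $3$ on) strictly decrease, and since $Q_{2m}<Q_{2m+1}$ the two subsequences sandwich a common limit. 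Consequently, for every $a\ge 3$ one has $Q_2<Q_a\le Q_3=(1+q^{-1})(1-q^{-2})(1+q^{-3})$, which are the middle and right inequalities. The leftmost inequality $1<Q_2=(1+q^{-1})(1-q^{-2})$ expands to $q^{-1}(1-q^{-1}-q^{-2})>0$, i.e. $q^2-q-1>0$, which holds for $q\ge 2$.

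The main obstacle I anticipate is the lower bound in part (i): unlike the other bounds, which follow from termwise monotonicity or a single algebraic check, it requires controlling a decreasing infinite product from below, and the naive Weierstrass bound applied to the whole product is too lossy for small $q$ (for instance it gives only $0$ when $q=2$). Factoring out $P_2$ first and estimating only the tail $j\ge 3$ is what shrinks the error to $\tfrac{1}{q^2(q-1)}$ and closes the gap. A secondary bookkeeping point is tracking strict versus non-strict inequalities (equality at $q=2$ in the left bound of (i), and at $a=2$ and $a=3$ in the two product bounds).
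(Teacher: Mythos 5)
Your proof is correct. Note, however, that the paper does not prove this lemma at all: it is quoted directly from \cite[Lemma 4.1]{alavi2015large} and used as a black box, so there is no internal argument to compare yours against; what you have produced is a self-contained elementary verification of the cited result. All the steps check out: the two outer bounds in (i) and the bound $1<(1+q^{-1})(1-q^{-2})$ in (ii) reduce to the correct algebraic inequalities ($q\ge 2$ and $q^2-q-1>0$ respectively); the crucial lower bound in (i) is handled soundly by writing $P_a=P_2\prod_{j=3}^{a}(1-q^{-j})$, applying the Weierstrass inequality only to the tail to get $\prod_{j=3}^{a}(1-q^{-j})>1-\tfrac{1}{q^2(q-1)}$, and then using $P_2<1-q^{-1}$ to bound the error strictly below $q^{-3}$ — and your remark that applying Weierstrass to the whole product is too lossy at $q=2$ is accurate, so factoring out $P_2$ is indeed the essential move; and in (ii) the interleaving argument (even-indexed partial products $Q_{2m}$ strictly increasing, odd-indexed ones strictly decreasing, with $Q_{2m}<Q_{2m+1}$) correctly yields $Q_2<Q_a\le Q_3$ for all $a\ge 3$, including the strictness and equality cases claimed in the statement.
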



\section{Proof of Theorem \ref{main} for $\mathrm{Soc}(G)=\mathrm{PSL}_n(q)$}\label{s3}

Throughout this section, we assume the following hypothesis.
\begin{hypothesis}
	\label{hyp:3.1}
	Let \( \mathcal{D} \) be a non-trivial \( 2\text{-}(v,k,\lambda) \) design with \( \lambda \ge (r,\lambda)^2 > 1 \), admitting a flag-transitive automorphism group \( G \) whose socle is \( X = \mathrm{PSL}_n(q) \), where \( n \ge 3 \) and \( q = p^f \) for some prime \( p \) and positive integer \( f \). Let \( d = (n, q-1) \), and let \( \alpha \) be a point of \( \mathcal{D} \), \( H = G_{\alpha} \) and \( H_0 = H \cap X \).
\end{hypothesis}

By Lemma \ref{point-primitive}, $G$ is point-primitive. Since $X$ is normal in $G$, it is transitive  on the points of $\mathcal{D}$. Thus $v=|G|/|H|=|X|/|H_0|$. 
The primitivity of $G$ further implies that    $H$ is a maximal subgroup of $G$. By Aschbacher's Theorem \cite{aschbacher1984maximal}, $H$ lies in one of the geometric subgroup families $\mathcal{C}_i$ ($1 \leqslant i \leqslant 8$), or in the family $\mathcal{S}$ of almost simple subgroups satisfying 
suitable conditions. When investigating the subgroups in the Aschbacher families, we make frequent use of the information on their structures in \cite[Chap. 4]{kleidman1990subgroup} 

Let \( s \) be a non-trivial subdegree of \( G \), and set \( R = (v - 1, s) \). In~\cite{devillers2021flag}, the authors established the bounds \( v < 2s^2 \) and \( v < 2R^2 \) in their classification of \( 2 \)-\((v,k,2) \) designs admitting a flag-transitive almost simple group \( G \) with socle \( \mathrm{PSL}_n(q) \). These inequalities played a key role in their analysis of maximal geometric subgroups. In our setting, where \( \lambda \geq (r, \lambda)^2 \), we derive significantly stronger constraints: namely, \( v < s^2 \) and \( v < R^2 \) (Lemma~\ref{deisgns_divisibility}(ii)). Hence, all results based on the inequalities \( v < 2s^2 \) and \( v < 2R^2 \) in their work remain valid in our context. The relevant results are summarized in Table \ref{tab:conditions}.

\begin{table}[H]
	\caption{Conclusions  derived by $v<2s^2$ or $v<2R^2$ in \cite{devillers2021flag} }\label{tab:conditions}
	\centering
	\begin{tabular}{llll}
		\toprule
		Class & Type of $H$ &  Conclusion &Reference\\
		\midrule
		$\mathcal{C}_1$ & $P_i$ & $i\leq 2$ &\cite[Lemma 4.2]{devillers2021flag} \\
        $\mathcal{C}_1$  & $P_{i,n-i}$ & $i=1$&\cite[Lemma 4.1]{devillers2021flag}\\
        $\mathcal{C}_1$  & $\mathrm{GL}_i(q)\oplus\mathrm{GL}_{n-i}(q)$ & Impossible&\cite[Lemma 4.1]{devillers2021flag}\\
		$\mathcal{C}_2$  & $\mathrm{GL}_m(q)\wr \mathrm{S}_{t}$ & $t\geq 3$, $m\ge2$&\cite[Lemma 4.3]{devillers2021flag}\\
        $\mathcal{C}_3$ &  $\mathrm{GL}_m(q^{2})$ & $m=2$&\cite[Lemma 4.4]{devillers2021flag}\\
		$\mathcal{C}_5$ & $\mathrm{GL}(n,q^{1/2})$ & $n \le 4$ &\cite[Lemma 4.6]{devillers2021flag}\\
		\bottomrule
	\end{tabular}
\end{table}

\begin{lemma}
	\label{lem:3.2}
	Assume Hypothesis \ref{hyp:3.1}. If   $H \in \mathcal{C}_1$, then  $G \leq  \mathrm{P}\Gamma\mathrm{L}_n(q)$ and $H$  is the stabilizer in  $G$  of an  $i$-dimensional subspace for some $1\leq i\leq n/2$.
\end{lemma}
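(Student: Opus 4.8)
The plan is to combine the structure of the Aschbacher class $\mathcal{C}_1$ with the sharpened bounds $v<(r^*)^2\le s^2$ and $v<(v-1,s)^2$ from Lemma~\ref{deisgns_divisibility}(ii). By \cite[Chap.~4]{kleidman1990subgroup} together with Table~\ref{tab:conditions}, an $H\in\mathcal{C}_1$ is the stabilizer of a subspace (type $P_i$), of a flag $U_i\subset U_{n-i}$ (type $P_{i,n-i}$), or of a direct-sum decomposition (type $\mathrm{GL}_i(q)\oplus\mathrm{GL}_{n-i}(q)$). Since our hypothesis $\lambda\ge(r,\lambda)^2$ yields $v<s^2$, which is stronger than the bound $v<2s^2$ used in \cite{devillers2021flag}, every conclusion of Table~\ref{tab:conditions} applies: the decomposition type is impossible, the flag type forces $i=1$, and the subspace type forces $i\le 2$. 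It therefore remains to eliminate the flag type $P_{1,n-1}$ and then to pin down the subspace type.

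First I would dispose of the type $P_{1,n-1}$; this is the main obstacle, since it survives the coarse bound $v<s^2$ for small $q$. Here $v=\frac{(q^n-1)(q^{n-1}-1)}{(q-1)^2}$, and by Table~\ref{degrees} there is a subdegree dividing $s=\frac{2q(q^{n-2}-1)}{q-1}$. For $q\ge 4$ a direct estimate shows $(q^n-1)(q^{n-1}-1)\ge 4q^2(q^{n-2}-1)^2$, that is $v\ge s^2$, contradicting $v<s^2$. For $q\in\{2,3\}$ I would instead use the divisibility of $r^*$: writing $R=(v-1,s)$, a short Euclidean computation gives $R=4$ when $q=2$ and $R=3$ when $q=3$, so $R^2\le 16$; but $v\ge 21$ already for $n=3$ and $v$ grows with $n$, contradicting $v<R^2$. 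Hence $P_{1,n-1}$ does not occur.

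It then remains to identify $H$ as a subspace stabilizer with $G\le\mathrm{P\Gamma L}_n(q)$. At this point $H=G_U$ for an $i$-dimensional subspace $U$ with $i\le 2$, and via the $i\leftrightarrow n-i$ duality (which yields isomorphic designs with identical parameters) I may assume $i\le n/2$. Suppose $G\not\le\mathrm{P\Gamma L}_n(q)$, so $G$ induces a graph automorphism and $G^{+}:=G\cap\mathrm{P\Gamma L}_n(q)$ has index $2$ in $G$. If $i\neq n/2$, then every element of $G\setminus G^{+}$ maps $U$ to a subspace of dimension $n-i\neq i$ and so cannot fix $U$; thus $H=G_U\le G^{+}$, and since $X\le G^{+}$ already acts transitively on $i$-dimensional subspaces we get $H\lneq G^{+}\lneq G$, contradicting the maximality of $H$. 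Hence either $G\le\mathrm{P\Gamma L}_n(q)$, or $i=n/2$, which with $i\le 2$ and $n\ge 3$ forces $n=4$ and $i=2$. In that last case $v=(q^2+1)(q^2+q+1)$ and $s=q(q+1)^2$ give $R=(v-1,s)=q$, whence $R^2=q^2<v$, again contradicting $v<R^2$. Therefore $G\le\mathrm{P\Gamma L}_n(q)$ and $H$ is the stabilizer of an $i$-dimensional subspace for some $1\le i\le n/2$.

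The hardest part will be eliminating the type $P_{1,n-1}$ and the boundary subspace stabilizer with $i=n/2$: these are exactly the configurations left open by the coarse inequality $v<s^2$, and dispatching them relies on the finer arithmetic input $v<(v-1,s)^2$ of Lemma~\ref{deisgns_divisibility}(ii) together with the exact gcd computations above.
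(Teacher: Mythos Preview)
Your proof is correct but organized differently from the paper's and does some unnecessary work. Two comparisons are worth making.

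\textbf{Eliminating $P_{1,n-1}$.} The paper treats all $q$ at once by a single polynomial identity: writing out $\bigl(v-1,\,2q(q^{n-2}-1)/(q-1)\bigr)$ and observing that $(q^{2n-2}-q^{n-1}-q^{n-2}-q+2)-(q-1)^2$ is divisible by $(q-1)(q^{n-2}-1)$, one gets $r^*\mid 2q$ uniformly, whence $v<4q^2$ fails for $n\ge 3$. Your case split ($q\ge 4$ via the crude bound $v\ge s^2$, and $q\in\{2,3\}$ via the explicit values $R=4$ and $R=3$) also works, but note that your ``short Euclidean computation'' for $q=3$ really amounts to the same polynomial reduction the paper performs; you should write it out so that the claim $R=3$ is seen to hold for all $n$, not just verified for small $n$.

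\textbf{Deducing $G\le\mathrm{P}\Gamma\mathrm{L}_n(q)$.} The paper front-loads the structural dichotomy from \cite[Chap.~4]{kleidman1990subgroup}: if $G\nleq\mathrm{P}\Gamma\mathrm{L}_n(q)$ then every maximal $\mathcal{C}_1$-subgroup of $G$ is of type $P_{i,n-i}$ or $\mathrm{GL}_i(q)\oplus\mathrm{GL}_{n-i}(q)$, with no $P_i$ type surviving, not even for $i=n/2$. Thus once those two types are excluded, $G\le\mathrm{P}\Gamma\mathrm{L}_n(q)$ follows immediately. You instead reprove the $i\neq n/2$ half of this dichotomy by hand via maximality, and then spend a separate subdegree argument on the putative case $n=4$, $i=2$. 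That last case never actually arises in the Aschbacher framework, so your computation there (while arithmetically correct: $R=q$ and $v>q^2$) is superfluous. Citing the Kleidman--Liebeck structure directly, as the paper does, makes the argument both shorter and cleaner.
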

\begin{proof}
	Suppose that $G \nleq \mathrm{P}\Gamma\mathrm{L}_n(q)$. Then $H$ is of type $P_{i,n-i}$ or $\mathrm{GL}_i(q)\oplus\mathrm{GL}_{n-i}(q)$. From Table \ref{tab:conditions}, $H$ is of type $P_{1,n-1}$. In this case, $v=(q^n-1)(q^{n-1}-1)/(q-1)^2$.
	By  Lemma \ref{deisgns_divisibility}(ii) and Lemma \ref{some_subdgrees}, we have  $r^*\mid s$, where $s=2q(q^{n-2}-1)/(q-1)$ is a subdegree of $G$. Combining this with $r^*\mid(v-1)$, we deduce $r^*\mid \left(v-1,2q(q^{n-2}-1)/(q-1)\right)$. Note that   \[\left(v-1,\frac{2q(q^{n-2}-1)}{q-1}\right)=\frac{2q}{(q-1)^2}\cdot\left(q^{2n-2}-q^{n-1}-q^{n-2}-q+2,(q-1)(q^{n-2}-1)\right).\] Since $(q^{2n-2}-q^{n-1}-q^{n-2}-q+2)-(q-1)^2=(q^n+q^2-q-1)(q^{n-2}-1)$ is divisible by $(q-1)(q^{n-2}-1)$,  we obtain $r^*\mid 2q$. By Lemma \ref{designs_property}(iv), it follows that \[(q^n-1)(q^{n-1}-1)/(q-1)^2=v<(r^*)^2\leq4q^2,\]
    which is impossible for $n\geq3$ and $q\geq2$.
\end{proof}

\begin{lemma}\label{P_i}
    Assume Hypothesis \ref{hyp:3.1}. If  $H \in \mathcal{C}_1$, then  $H$  is the stabilizer in  $G$  of a $1$-dimensional subspace or a $2$-dimensional subspace with $n$ being odd. In the latter case, $\mathcal{D}$ cannot be symmetric.
\end{lemma}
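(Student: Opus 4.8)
The plan is to build on Lemma~\ref{lem:3.2}, which already restricts us to the case \(G\le\mathrm{P}\Gamma\mathrm{L}_n(q)\) with \(H\) the stabilizer of an \(i\)-dimensional subspace, \(1\le i\le n/2\). From Table~\ref{tab:conditions} (the entry for type \(P_i\)) we additionally know \(i\le 2\), so only \(i=1\) and \(i=2\) survive. The remaining work is to rule out \(i=2\) when \(n\) is even, and to show that \(i=2\) with \(n\) odd forces a non-symmetric design. First I would write down the point number \(v=\left[\genfrac{}{}{0pt}{}{n}{2}\right]_q\), the Gaussian binomial counting \(2\)-dimensional subspaces, namely \(v=(q^n-1)(q^{n-1}-1)/\big((q^2-1)(q-1)\big)\).

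For the even case \(n=2m\), I would invoke the subdegree \(s=q(q^i-1)(q^{n-i}-1)/(q-1)^2\) from Lemma~\ref{some_subdgrees} with \(i=2\), so \(s=q(q^2-1)(q^{n-2}-1)/(q-1)^2\). Using Lemma~\ref{deisgns_divisibility}(ii) we get \(r^*\mid s\) and \(r^*\mid v-1\), hence \(r^*\) divides \(\big(v-1,\,s\big)\). The key computation is to bound this gcd: I would reduce \(s\) modulo the factor \((q-1)\) structure and extract a small divisor (a fixed polynomial in \(q\) of much lower degree than \(v\)), then feed the resulting bound on \(r^*\) into \(v<(r^*)^2\) from Lemma~\ref{designs_property}(iv). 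Since \(v\) has degree \(2n-3\) in \(q\) while \(r^*\) will be forced to have degree roughly \(n-1\) or less, the inequality \(v<(r^*)^2\) should fail for \(n\ge4\) even, delivering a contradiction exactly as in the proof of Lemma~\ref{lem:3.2}. The main obstacle here is computing the greatest common divisor cleanly; I expect to use the same polynomial-division trick as in Lemma~\ref{lem:3.2}, subtracting a suitable multiple to reveal that the gcd divides a low-degree expression, after which the size estimate is automatic.

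For the odd case \(n\) odd with \(i=2\), ruling the configuration out entirely by the size argument is not possible (this case genuinely occurs, as the theorem statement records), so instead I would show the design cannot be symmetric. A symmetric design satisfies \(v=b\), equivalently \(r=k\); combined with the Fisher-type relation \(r(k-1)=\lambda(v-1)\) this gives \(k(k-1)=\lambda(v-1)\) and \(k^2-k-\lambda v+\lambda=0\). The natural route is to exploit divisibility: in a symmetric design \(r=k\) divides \(|H|\) by Lemma~\ref{deisgns_divisibility}(i), and \(\lambda(v-1)=k(k-1)\) forces \(k\mid \lambda\) together with the arithmetic constraints coming from \(n\) odd. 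I would combine the explicit \(v\) above with the parity/coprimality information \(d=(n,q-1)\) and the subdegree divisibility to derive that no integer solution \(k\) to \(k(k-1)=\lambda(v-1)\) with \(k\mid|H|\) can exist, contradicting symmetry.

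I expect the odd-case non-symmetry argument to be the genuinely delicate part. The even case is a routine repetition of the gcd-plus-size estimate already demonstrated in Lemma~\ref{lem:3.2}, whereas excluding symmetry requires a sharper numerical analysis: one must show that the quadratic \(k^2-k=\lambda(v-1)\) has no admissible integer root compatible with the order of the point stabilizer and with \(\lambda\ge(r,\lambda)^2\). A plausible shortcut is to note that for a symmetric design the complementary relation \(\lambda(v-1)=k(k-1)<v^2\) together with \(\lambda\ge(r,\lambda)^2\) and \(r=k\) constrains \((r,\lambda)\) severely; pushing these against the precise value of \(v\) for \(2\)-subspaces with \(n\) odd should close the case. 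If a direct contradiction proves elusive, the fallback is to use Lemma~\ref{deisgns_divisibility}(iv), \(\lambda|G|<|H|^3\), specialized to \(r=k\), to bound \(\lambda\) from above and eliminate the symmetric possibility by comparison with the lower bound \(\lambda\ge(r,\lambda)^2>1\).
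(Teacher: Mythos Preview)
Your plan for the even case is essentially the paper's argument: take the subdegree \(s=q(q+1)(q^{n-2}-1)/(q-1)\) from Table~\ref{degrees}, compute \((v-1,s)\), and play this against \(v<(r^*)^2\). The paper carries this out explicitly: the gcd factors as \(\frac{q(q^{n-2}-1)}{q^2-1}\cdot\bigl(\frac{q^n+q^2-q-1}{q-1},(q+1)^2\bigr)\), and when \(n\) is even the second factor is \(1\) because \((q^n+q^2-q-1)/(q-1)\equiv -1\pmod{q+1}\). Then \(r^*\mid q(q^{n-2}-1)/(q^2-1)\) forces \((q+1)(q^n-1)(q^{n-1}-1)<q^2(q^{n-2}-1)^2\), which is impossible. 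So your even-case outline is correct, and the specific computation you anticipated is exactly this one.

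The symmetric case is where your proposal has a real gap. You sketch several possible lines (solving the quadratic \(k(k-1)=\lambda(v-1)\), divisibility of \(k\) into \(|H|\), the bound \(\lambda|G|<|H|^3\)) but none of them is carried far enough to see whether it actually closes, and you yourself flag the argument as ``genuinely delicate'' with a fallback. The paper does something entirely different and much shorter: it observes that the action of \(G\) on \(2\)-spaces is primitive of rank~\(3\), so if \(\mathcal D\) were symmetric one could read off all possibilities from Dempwolff's classification \cite{dempwolff2001primitive} of primitive rank~\(3\) groups on symmetric designs; a check of that list shows none satisfies \(\lambda\ge(r,\lambda)^2>1\). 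This citation is the missing idea in your proposal. Your from-scratch arithmetic attack might eventually succeed, but as written it is speculative, whereas the rank~\(3\) classification dispatches the case in one line.
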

\begin{proof}
    By Lemma \ref{lem:3.2}, $G \leq  \mathrm{P}\Gamma\mathrm{L}_n(q)$ and $H$ is the stabilizer in $G$ of an $i$-dimensional subspace for some $1\leq i\leq n/2$. According to Table \ref{tab:conditions}, we have $i\le2$.
    Suppose $i=2$. Then \[v=\frac{(q^n-1)(q^{n-1}-1)}{(q^2-1)(q-1)}.\] From Table \ref{degrees}, we see that $r^*$ divides
\begin{align*}
     \left(v-1,\frac{q(q+1)(q^{n-2}-1)}{q-1}\right)
    =\frac{q(q^{n-2}-1)}{q^2-1}\cdot \left(\frac{q^n+q^2-q-1}{q-1},(q+1)^2\right).
\end{align*}
If  $n$ is even, then $(q^n+q^2-q-1)/(q-1)\equiv -1 \pmod {q+1}$ and so \[\left((q^n+q^2-q-1)/(q-1),(q+1)^2\right)=1.\] Thus $r^*$ divides  $q(q^{n-2}-1)/(q^2-1)$.
It follows from $v<(r^*)^2$ that $(q+1)(q^n-1)(q^{n-1}-1)<q^2(q^{n-2}-1)^2$, a contradiction. Thus $n$ is odd. Moreover, if $\mathcal{D}$ is symmetric, then the
possibilities for $\mathcal{D}$ can be read off from \cite{dempwolff2001primitive}. But there exists no design that satisfies $\lambda\ge (r,\lambda)^2>1$. This completes the proof.
\end{proof}

\begin{lemma}\label{C_2}
    Assume Hypothesis \ref{hyp:3.1}. Then  $ H \notin \mathcal{C}_2$.
\end{lemma}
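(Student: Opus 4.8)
The plan is to eliminate the single $\mathcal{C}_2$ configuration left open by Table~\ref{tab:conditions}. By the $\mathcal{C}_2$ row of that table (whose conclusion carries over verbatim, since our inequalities $v<s^2$ and $v<R^2$ are sharper than the $v<2s^2$, $v<2R^2$ used there), we may assume $H$ is of type $\mathrm{GL}_m(q)\wr\mathrm{S}_t$ with $t\ge 3$ and $m\ge 2$, so that $n=mt\ge 6$. The preimage of $H_0$ in $\mathrm{SL}_n(q)$ is $(\mathrm{GL}_m(q)\wr\mathrm{S}_t)\cap\mathrm{SL}_n(q)$, of order $|\mathrm{GL}_m(q)|^t\,t!/(q-1)$; hence $|H_0|=|\mathrm{GL}_m(q)|^t\,t!/\bigl((q-1)d\bigr)$ and $v=|X|/|H_0|=|\mathrm{GL}_n(q)|/\bigl(|\mathrm{GL}_m(q)|^t\,t!\bigr)$.

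The decisive observation is that $p$ always divides $v$ here. The $p$-part of $v$ equals $q^{m^2t(t-1)/2}/(t!)_p$, whose $p$-adic valuation is $f\,m^2t(t-1)/2-v_p(t!)$; since $v_p(t!)<t$ while $f\,m^2t(t-1)/2\ge 2t(t-1)>t$ for $t\ge 3$, $m\ge 2$, this valuation is positive. This lets me invoke the sharper Lemma~\ref{lemma|X|<|Out|^2|H_0|....} rather than the plain bound $\lambda|G|<|H|^3$ of Lemma~\ref{deisgns_divisibility}(iv); the latter is in fact too weak (it fails to yield a contradiction already for, e.g., $X=\mathrm{PSL}_6(9)$), whereas removing the $p$-part of $|H_0|$ tightens the estimate enough to succeed. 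Thus I obtain the necessary inequality $|X|<|\mathrm{Out}(X)|_{p'}^2\,|H_0|\,|H_0|_{p'}^2$, which I aim to contradict.

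To that end I compute $|H_0|_p=q^{tm(m-1)/2}(t!)_p$, so that $|H_0|_{p'}=\prod_{i=1}^m(q^i-1)^t\,(t!)_{p'}/\bigl((q-1)d\bigr)$, and I bound $|\mathrm{Out}(X)|_{p'}\le 2df$. Writing $|X|=q^{n^2-1}\prod_{i=2}^n(1-q^{-i})/d$ and substituting, the power of $q$ in $|X|$ exceeds that in $|\mathrm{Out}(X)|_{p'}^2\,|H_0|\,|H_0|_{p'}^2$ by $m^2t(t-2)-mt+2$, which is at least $8$ (attained at $t=3$, $m=2$) and grows with both $t$ and $m$. Controlling the remaining factors $\prod_i(1-q^{-i})$ by Lemma~\ref{some_inequalities}, I would then show that this $q$-power gap dominates the field factor $f^2$, the automorphism factor, and the combinatorial factor $t!$, so that the displayed inequality is violated — the desired contradiction, giving $H\notin\mathcal{C}_2$.

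The main obstacle is the tightness of this last comparison at the smallest parameters $t=3$, $m=2$ with small $q$: there the bare gap of $8$ in the $q$-exponent does \emph{not} by itself outweigh the factor $t!\,(t!)_{p'}^2$ once the correction terms are discarded. The key is that $|H_0|$ carries a factor $\prod_{i=1}^m(1-q^{-i})^t$ that is bounded well away from $1$ precisely when $q$ is small, so one must \emph{retain} these correction factors (for instance, keeping a surviving factor $(1-q^{-1})^{3(t-1)}$ on the small side) rather than crudely replacing them by $1$; this closes the gap uniformly for small $q$, while for large $q$ the $q$-power alone suffices. The final point to verify is that the super-exponential $t!$-growth never overtakes the (quadratic-in-$t$) growth of the $q$-exponent across all $t\ge 3$, which completes the proof.
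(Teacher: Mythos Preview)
Your proposal is correct and follows essentially the same route as the paper: reduce via Table~\ref{tab:conditions} to $t\ge 3$, $m\ge 2$, apply inequality~\eqref{inequality|X|<|Out(X)^2....|} of Lemma~\ref{lemma|X|<|Out|^2|H_0|....} (which requires $p\mid v$ --- you verify this explicitly, the paper simply uses it), and bound the resulting $(1-q^{-j})$ products via Lemma~\ref{some_inequalities}. The only divergence is at the endgame: instead of sharpening the product estimate to dispose of the extreme case $(t,m,q)=(3,2,2)$ uniformly as you propose, the paper lets that single case survive the inequality and eliminates it separately by observing $r^*\mid 2df|H_0|=2592$ while $v=15554560>2592^2$.
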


\begin{proof}
Suppose that $H$ is a $\mathcal{C}_{2}$-subgroup of type $\mathrm{GL}_m(q)\wr \mathrm{S}_t$,  where $n=mt$. According to Table \ref{tab:conditions}, we have $t\ge3$ and $m\geq 2$. 
 By \cite[proposition 4.2.9]{kleidman1990subgroup},  we have
\[|H_0|=d^{-1}(q-1)^{-1}t!\cdot q^{n(m-1)/2}\prod\limits_{j=1}^m(q^j-1)^t.\]
Since $|X|=d^{-1}\cdot q^{n(n-1)/2}\prod\limits_{j=2}^n\left(q^j-1\right)$ and $|\mathrm{Out}(X)|=2df$, the inequality \eqref{inequality|X|<|Out(X)^2....|} in Lemma \ref{lemma|X|<|Out|^2|H_0|....} yields
  $$
d^{-1}\cdot q^{n(n-1)/2}\prod\limits_{j=2}^n\left(q^j-1\right)<(2df)^2\cdot q^{n(m-1)/2}\cdot \left(d^{-1}(q-1)^{-1}t!\cdot\prod\limits_{j=1}^m(q^j-1)^t\right)^3.
 $$ 
Simplifying this inequality gives
 \begin{equation}\label{eq:3.4 0}
     q^{n(n-2m-1)}<4(q-1)^{-2}f^2(t!)^3\cdot\prod\limits_{j=1}^m(1-q^{-j})^{3t}\cdot\prod\limits_{j=1}^n(1-q^{-j})^{-1}.
 \end{equation}
It follows from Lemma \ref{some_inequalities}(i) that $$\prod\limits_{j=1}^m(1-q^{-j})^{3t}\cdot\prod\limits_{j=1}^n(1-q^{-j})^{-1}<(1-q^{-1})^{3t}(1-q^{-2})^{3t}(1-q^{-1})^{-2}<(1-q^{-1})^2.$$ Combining this with $f^2<2q$, (\ref{eq:3.4 0}) implies that
$q^{n(n-2m-1)}<4f^2(t!)^3q^{-2}<8(t!)^3q^{-1}$, which simplifies to $q^{(t^2-2t)m^2-tm+1}<8(t!)^3$. 
Since $q \ge 2$, by taking logarithm of base $2$, we derive that
$(t^2-2t)m^2-tm+1<3\log_2(t!)+3$. It follows from $m\geq2$ that $4t^2-10t<3\log_2(t!)+2$, which implies $t=3$. Then we have  $3m^2-3m<3\log_23+5$, and so  $m=2$. Moreover, from inequality \eqref{eq:3.4 0}, we deduce that $q=2$. By  Lemma \ref{deisgns_divisibility}(i), $r^*$ divides $2df|H_0|=2592$. Now $v=15554560$, which  contradicts  $v<(r^*)^2$.
\end{proof}

\begin{lemma}\label{C_3}
    Assume Hypothesis \ref{hyp:3.1}. If $H \in \mathcal{C}_3$, then $H\cong \mathrm{AGL}_1(7)$ and \begin{enumerate}[(i)]
        \item $\mathcal{D}$ is a $2$-$(8,4,6)$ or $2$-$(8,4,9)$ design when $G\cong \mathrm{PSL}_3(2):2$;
        \item $\mathcal{D}$ is a $2$-$(8,4,9)$ design when $G\cong \mathrm{PSL}_3(2)$.
    \end{enumerate}
\end{lemma}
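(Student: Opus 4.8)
The plan is to combine the field-extension structure of $\mathcal{C}_3$-subgroups with the order inequality of Lemma~\ref{lemma|X|<|Out|^2|H_0|....} and the divisibility relations $r^*\mid(|H|,v-1)$, $v<(r^*)^2$. A $\mathcal{C}_3$-subgroup is of type $\mathrm{GL}_m(q^b)$ with $n=mb$ and $b$ prime, and by \cite[Prop.~4.3.6]{kleidman1990subgroup}, up to a divisor of $d$,
\[
|H_0|=\frac{b}{d}\cdot\frac{q^b-1}{q-1}\cdot|\mathrm{SL}_m(q^b)|.
\]
Because the $p$-part of $|H_0|$ is strictly smaller than that of $|X|$, we have $p\mid v$, so the inequality $|X|<|\mathrm{Out}(X)|_{p'}^2\cdot|H_0|\cdot|H_0|_{p'}^2$ of Lemma~\ref{lemma|X|<|Out|^2|H_0|....} applies. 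Viewed as a power of $q$, $|X|$ has order $q^{b^2m^2-1}$ and $|H_0|\cdot|H_0|_{p'}^2$ has order $q^{2bm^2+bm-3}$, while $|\mathrm{Out}(X)|_{p'}^2$ is bounded. The exponent gap $b^2m^2-1-(2bm^2+bm-3)=bm\bigl(m(b-2)-1\bigr)+2$ is at least $8$ when $b=3$ and $m\ge2$, and it grows for $b\ge5$; in all these cases the bounded factor $(2df)^2$ cannot compensate, contradicting the lemma. Hence only $b=2$ and $(b,m)=(3,1)$ survive, and for $b=2$ Table~\ref{tab:conditions} forces $m=2$, so $n=4$.

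Next I would dispose of the case $n=4$, $m=2$, $b=2$. Here $v=q^4(q-1)(q^3-1)/2$, and a direct computation gives $v\equiv1\pmod{q^2+1}$, $v\equiv0\pmod{q-1}$ and $v\equiv2\pmod{q+1}$; thus $(q^2+1)\mid v-1$, whereas $q\pm1$ contribute nothing to $(|H|,v-1)$. Since the remaining factors of $|H|_{p'}$ share only bounded common divisors with $v-1$, we obtain $r^*=O(q^2)$ while $v=\Theta(q^8)$, contradicting $v<(r^*)^2$; the finitely many small $q$ are settled directly (for instance $v=56$ and $r^*\mid5$ when $q=2$). Hence $b=2$ is impossible, and we are reduced to the Singer case $(b,m)=(3,1)$, $n=3$.

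In the Singer case $v=q^3(q-1)^2(q+1)/3$ and $|H_0|=3(q^2+q+1)/d$, so $r^*\mid\bigl(6f(q^2+q+1),\,v-1\bigr)$. Using $q^3\equiv1\pmod{q^2+q+1}$ one finds $v\equiv1\pmod{q^2+q+1}$ when $3\nmid q^2+q+1$, and $v\equiv1\pmod{(q^2+q+1)/3}$ otherwise, so the contribution of $q^2+q+1$ to the common divisor is at most $q^2+q+1$. Then $v<(r^*)^2$ forces $q\le\sqrt{108}\,f$, a short explicit list of prime powers, and evaluating $\bigl(6f(q^2+q+1),\,v-1\bigr)$ for each leaves only $q\in\{2,3\}$. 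To eliminate $q=3$ (where $v=144$ and necessarily $r^*=13$), I would combine the design equations $13(k-1)=143\lambda^*$, $r=13(r,\lambda)$, $\lambda=\lambda^*(r,\lambda)$, $(r,\lambda)>1$ with flag-transitivity: the block stabiliser $G_B$ has order $|G|/b$ and acts transitively on a block, and matching this against the subgroup structure of $\mathrm{PSL}_3(3)$---for example no subgroup can act regularly on a block of size $78$, since $N_X(C_{13})=13{:}3$ has order $39$---rules out every admissible $(k,\lambda)$; this can equally be confirmed by computation. Thus $n=3$ and $q=2$.

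Finally, for $X=\mathrm{PSL}_3(2)\cong\mathrm{PSL}_2(7)$ acting on the $v=8$ cosets of $H_0=7{:}3$, I would classify the flag-transitive designs directly by running over block sizes $3\le k\le 6$ and the $G$-orbits on $k$-subsets (equivalently, using the cross-ratio invariant on $\mathrm{PG}(1,7)$). The only orbits producing a $2$-design with $\lambda\ge(r,\lambda)^2>1$ are the $42$ harmonic $4$-subsets, giving a $2$-$(8,4,9)$ design with $r=21$, and the $28$ equianharmonic $4$-subsets, giving a $2$-$(8,4,6)$ design with $r=14$. Since Lemma~\ref{deisgns_divisibility}(i) requires $r\mid|G_\alpha|$, the $2$-$(8,4,9)$ design is flag-transitive under both $\mathrm{PSL}_3(2)$ (where $|G_\alpha|=21$) and $\mathrm{PSL}_3(2){:}2$ (where $|G_\alpha|=42$), while the $2$-$(8,4,6)$ design, having $r=14\nmid21$, is flag-transitive only under $\mathrm{PSL}_3(2){:}2$; this is exactly the stated conclusion. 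The main obstacle is the Singer case: at $b=3$ the exponent balance in the order inequality is exact, so the bounds are borderline and eliminating the small $q$ (especially $q=3$) requires the delicate interplay between cyclotomic divisibilities and the subgroup structure of $\mathrm{PSL}_3(q)$; the final orbit computation on $8$ points is routine but must be carried out carefully to separate the two groups.
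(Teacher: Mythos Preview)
Your overall strategy matches the paper's: apply the order inequality of Lemma~\ref{lemma|X|<|Out|^2|H_0|....} to reduce to small $(b,m)$, then use $r^*\mid(v-1,|H|)$ and $v<(r^*)^2$ to pin down $q$, and finally settle the surviving cases by direct enumeration. However, three steps are not tight enough as written.

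First, your exponent-gap argument does not actually eliminate $(b,m)=(3,2)$ or $(5,1)$ for all $q$. The constants hidden in your leading-order estimates (in particular the factor $(q-1)^{-3}$ coming from $|H_0|$) mean that for $q=2$ the inequality~\eqref{inequality|X|<|Out(X)^2....|} can still hold. The paper's careful version (inequality~\eqref{eq:00}) indeed leaves $q=2$ in both of these cases, which are then excluded by computing $\bigl(v-1,|\mathrm{Out}(X)|\cdot|H_0|_{p'}\bigr)$ directly.

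Second, in the Singer case $n=3$, your claim that the gcd test leaves only $q\in\{2,3\}$ is incorrect: for $q=5$ one has $v=4000$ and $\bigl(6f(q^2+q+1),v-1\bigr)=\gcd(186,3999)=93$, and $93^2=8649>4000$, so $q=5$ survives. The paper correctly finds $q\in\{2,3,5\}$ and eliminates $q=5$ by a further integrality argument analogous to the $q=3$ case.

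Third, your block-stabiliser argument for $q=3$ does not go through. Since $r=13(r,\lambda)$ must divide $|H|$ and $|H_0|=39$, one is forced into $G=\mathrm{PSL}_3(3){:}2$, $|H|=78$, and hence $|G_B|=78$. But $\mathrm{PSL}_3(3){:}2$ \emph{does} contain a subgroup of order $78$, namely $13{:}6$, the full normaliser of a Singer cycle (the graph automorphism inverts it), so regularity on a block of size $78$ is not immediately ruled out. The paper narrows the parameters to $(v,b,r,k,\lambda)=(36,144,78,78,42)$ via the integrality of $b$ and then resorts to computation, which you also acknowledge as a fallback. Your $q=2$ endgame, by contrast, is sound; the explicit identification of the harmonic and equianharmonic orbits on $\mathrm{PG}(1,7)$, together with $r\mid|G_\alpha|$ to separate $\mathrm{PSL}_3(2)$ from $\mathrm{PSL}_3(2){:}2$, is a clean alternative to the paper's Magma verification.
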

\begin{proof}
Suppose that $H$ is a $\mathcal{C}_{3}$ subgroup of type $\mathrm{GL}_m(q^t)$ where  $n=mt$ and $t$ is prime. 
By \cite[Proposition 4.3.6]{kleidman1990subgroup}, we have
\[
	|H_0|=d^{-1}(q-1)^{-1}t\cdot q^{n(m-1)/2}\prod\limits_{j=1}^m(q^{tj}-1).
			\]
 Thus we can compute that
\[v=\frac{|X|}{|H_0|}=t^{-1}q^{n(n-m)/2}\cdot\prod\limits_{j=1}^n (q^{j}-1)\cdot\prod\limits_{j=1}^m (q^{tj}-1)^{-1}.\]
Arguing similarly to the proof of Lemma \ref{C_2}, and applying Lemma \ref{some_inequalities}(i) to the inequality \eqref{inequality|X|<|Out(X)^2....|} in Lemma \ref{lemma|X|<|Out|^2|H_0|....}, we deduce that

\begin{equation}\label{eq:00}
    2^{n^2-2tm^2-tm+1}\le q^{n^2-2tm^2-tm+1}<2^7t^3.
\end{equation}
By taking logarithm of base $2$, we have
\begin{equation}\label{eq:0}
    (t^2-2t)m^2-tm-6<3\log_2t.
\end{equation}
Since $t$ is a prime, we have $t^2-3t-6\leq (t^2-2t)m^2-tm-6<3\log_2t$. Therefore $t<6$.

\textbf{Case 1:} $t=5$. Inequality \eqref{eq:0} implies that $15m^2-5m-6<3\log_25$. Hence $m=1$. From \eqref{eq:00}, we have $q=2$. Now we compute $v=63\cdot 2^{10}.$ By Lemma \ref{deisgns_divisibility}(iii), $r^*$ divides $|\mathrm{Out}(X)|_{p'}\cdot|H_0|_{p'}=155. $ It follows from $v<(r^*)^2$ that $63\cdot 2^{10}<155^2$, which is a contradiction.

\textbf{Case 2:} $t=3$. Inequality \eqref{eq:0} implies that $3m^2-3m-6<3\log_23$, hence $m<3$. 

\textbf{Subcase 2.1:} $m=1$. Then $v=q^3(q^2-1)(q-1)/3$ and $r^*$ divides $2df|H_0|_{p'}=6f(q^2+q+1).$ By $v<(r^*)^2$, we have 
\[q^3(q^2-1)(q-1)<108f^2(q^2+q+1)^2.\]
This inequality holds for $q\in\{2,3,4,5,7,8,9,11,16,27,32\}$. Moreover,
let $R=(6f(q^2+q+1),v-1)$. Then the condition $v<(r^*)^2\le R^2$ implies that $q\in\{2,3,5\}$.

Assume $q=2$. Then $v=8$ and $r^*\mid R=7$. By $v<(r^*)^2$, we have $r^*=7$ and so $r=7(r,\lambda)$. From $r^*(k-1)=\lambda^*(v-1),$ we deduce that $\lambda^*=k-1$. Note that $k<7$ since $\mathcal{D}$ is non-trivial.
\begin{enumerate}[(1)]
    \item If $k=6$, then $\lambda^*=5$. From $vr=bk$ and $\mathcal{D}$ is incomplete, we deduce that $b=28(r,\lambda)/3<\binom{8}{6}=28$, which yields that $(r,\lambda)<3$ and therefore $b\notin\mathbb{Z}$, a contradiction.
    \item If $k=5$, then $\lambda^*=4.$ From $vr=bk$, we deduce that $b=56(r,\lambda)/5$. Since $b\in\mathbb{Z}$, $5\mid(r,\lambda)$. On the other hand, $r$ divides $2df|H_0|=42$ according to Lemma \ref{deisgns_divisibility}(i), thus $(r,\lambda)\mid 6$, a contradiction. 
    \item If $k=4$, then $\lambda^*=3$. It follows from $3(r,\lambda)=\lambda\ge (r,\lambda)^2$ that $(r,\lambda)\le 3$ and so $(r,\lambda)\in\{2,3\}$. For these two cases, since $G$ is flag-transitive, we can verify using Magma \cite{MAGMAbosma1994handbook}  that 
    $H\cong\mathrm{AGL}_1(7)$. And we have $\mathcal{D}$ is a $2$-$(8,4,6)$ or $2$-$(8,4,9)$ design with $G\cong \mathrm{PSL}_3(2):2$; or $\mathcal{D}$ is a $2$-$(8,4,9)$ design with $G\cong \mathrm{PSL}_3(2)$. 
\end{enumerate}
Similarly, considering the integrality of $b$ and the conditions derived above for $q=2$, we can exclude the remaining cases where $k=3$ and $2$.

Assume $q=3$. Then $v=144$, $|H_0|=39$, and $r^*\mid R=13$. By $v<(r^*)^2$, we deduce $r^*=13$ and so $r=13(r,\lambda)$. From $r^*(k-1)=\lambda^*(v-1)$, it follows that $11\lambda^*=k-1$. Combining this $vr=bk$, we deduce that 
\[b=\frac{1872(r,\lambda)}{11\lambda^*+1}\in\mathbb{Z}.\] 
For each  $\lambda^*<r^*=13$, we compute $b$ and find that $b\in\mathbb{Z}$ holds only for $\lambda^*=7$. It follows from  $\lambda v<r^2$  that $7(r,\lambda)\cdot 144<169(r,\lambda)^2$, which implies that $(r,\lambda)\geq6$. On the other hand, Lemma \ref{deisgns_divisibility}(i) shows that $r$ divides $2df|H_0|=78$, so $(r,\lambda)\mid 6$. Hence $(r,\lambda)=6$. 
Then $r=78$ and $b=144$. By Magma \cite{MAGMAbosma1994handbook}, there is no such design satisfying Hypothesis \ref{hyp:3.1}. The case  $q=5$ can be excluded in a similar way, and we omit the details.


\textbf{Subcase 2.2:} $m=2$. Then $v=q^{12}(q^5-1)(q^4-1)(q^2-1)(q-1)/3$ and Lemma \ref{deisgns_divisibility}(iii) shows that $r^*$ divides $2df|H_0|_{p'}=6f(q^2+q+1)(q^6-1)$. It follows from $v<(r^*)^2$ that
\[q^{12}(q^5-1)(q^4-1)(q^2-1)(q-1)<108f^2(q^2+q+1)^2(q^6-1)^2.\]
This inequality holds only for $q=2$. Then we have $r^*$ divides $\left(v-1,6f(q^2+q+1)(q^6-1)\right)=1$, which contradicts  $v<(r^*)^2$.

\textbf{Case 3:} $t=2$. By Table \ref{tab:conditions}, we have $m=2$. Then   $v=q^4(q^3-1)(q-1)/2$ and $|H_0|=2q^2(q^4-1)(q+1)/d$. Note that $2$, $q-1$, and $p$ are all divisors of $v$ and hence coprime to $r^*$. Moreover, from \[v-1=\frac{1}{2}(q+1)(q^7-2q^6+2q^5-3q^4+4q^3-4q^2+4q-4)+1\] we deduce that $(v-1,q+1)=1$ and thus $(r^*,q+1)=1$. It then follows from Lemma \ref{deisgns_divisibility}(i) that $r^*\mid f(q^2+1)$. However, this contradicts the inequality $v<(r^*)^2$. 
\end{proof}

\begin{lemma}\label{PSL:C4}
    Assume Hypothesis \ref{hyp:3.1}. Then  $ H \notin \mathcal{C}_4$.
\end{lemma}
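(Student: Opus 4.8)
The plan is to follow the template of Lemmas~\ref{C_2} and~\ref{C_3}: write down the order of $H_0$, feed it into the counting inequality~\eqref{inequality|X|<|Out(X)^2....|}, and reduce to a comparison between a power of $q$ and an absolute constant. Suppose $H\in\mathcal{C}_4$, so that $H$ is of type $\mathrm{GL}_a(q)\otimes\mathrm{GL}_b(q)$ with $n=ab$ and $2\le a<b$. By \cite[Proposition 4.4.10]{kleidman1990subgroup} one has the upper bound
\[
|H_0|\le\frac{1}{d(q-1)}\,|\mathrm{GL}_a(q)|\cdot|\mathrm{GL}_b(q)|=\frac{1}{d(q-1)}\,q^{\frac{a(a-1)}{2}+\frac{b(b-1)}{2}}\prod_{j=1}^{a}(q^j-1)\prod_{j=1}^{b}(q^j-1),
\]
and correspondingly $|H_0|_{p'}\le d^{-1}(q-1)^{-1}\prod_{j=1}^{a}(q^j-1)\prod_{j=1}^{b}(q^j-1)$. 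Before invoking~\eqref{inequality|X|<|Out(X)^2....|} I would first confirm that $p\mid v$: the $p$-part of $v=|X|/|H_0|$ is $q^{\frac{1}{2}(ab(ab-1)-a(a-1)-b(b-1))}$, and the exponent is positive for all $a,b\ge2$, so Lemma~\ref{lemma|X|<|Out|^2|H_0|....} applies.

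Next I would substitute these bounds together with $|X|=d^{-1}q^{n(n-1)/2}\prod_{j=2}^{n}(q^j-1)$ and $|\mathrm{Out}(X)|=2df$ into~\eqref{inequality|X|<|Out(X)^2....|}. After cancelling the factor $d$ and collecting the powers of $q$ (the left exponent is $n^2-1$, the right exponent $2a^2+a+2b^2+b-3$), the inequality reduces to
\[
q^{\,n^2+2-2a^2-2b^2-a-b}<4f^2\cdot\frac{(1-q^{-1})^{-3}\prod_{j=1}^{a}(1-q^{-j})^{3}\prod_{j=1}^{b}(1-q^{-j})^{3}}{\prod_{j=2}^{n}(1-q^{-j})}.
\]

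The decisive step, and the place where a careless estimate fails, is the treatment of the right-hand product. The factor $(1-q^{-1})^{-3}$ coming from $(q-1)^{-3}$ cancels exactly against the $j=1$ terms of $\prod_{j=1}^{a}(1-q^{-j})^{3}$, so the numerator is at most $1$; by Lemma~\ref{some_inequalities}(i) the denominator satisfies $\prod_{j=2}^{n}(1-q^{-j})>1-q^{-1}\ge\tfrac12$, so the whole fraction is smaller than $2$. Using $f^2<2q$, this gives $q^{\,n^2+1-2a^2-2b^2-a-b}<16$. Writing $E=a^2b^2+1-2a^2-2b^2-a-b$, one checks that $E$ is increasing in each of $a,b$ on the range $a,b\ge2$, so over pairs with $2\le a<b$ its minimum is $E=6$, attained at $(a,b)=(2,3)$; hence $q^{E}\ge2^{6}=64>16$, a contradiction, and therefore $H\notin\mathcal{C}_4$. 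I expect the only genuine obstacle to be precisely this constant: bounding each $(1-q^{-j})$ crudely by $1$ and $(1-q^{-1})^{-1}$ by $2$ inflates the right side enough to leave $\mathrm{PSL}_6(2)$ (the pair $(a,b,q)=(2,3,2)$) as a spurious survivor, so the cancellation must be carried out exactly. Once it is, no residual cases remain and no appeal to Magma is required; should one nonetheless prefer a coarser estimate, the leftover case $\mathrm{PSL}_6(2)$ is killed directly by Lemma~\ref{deisgns_divisibility}(iii), which forces $r^*\mid|H_0|_{p'}=63$ while $v=2^{11}\cdot3^2\cdot5\cdot7\cdot31$ far exceeds $63^2$.
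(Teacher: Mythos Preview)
Your proposal is correct and follows essentially the same approach as the paper: both feed the Kleidman--Liebeck order of $H_0$ into inequality~\eqref{inequality|X|<|Out(X)^2....|}, estimate the resulting products via Lemma~\ref{some_inequalities}(i), and reduce to an inequality of the form $q^{E}<\text{const}$ that fails at the minimal pair. The only differences are cosmetic: the paper writes the tensor factors as $i$ and $n/i$ (and minimizes the exponent as a function of $i$ for fixed $n$, obtaining $q^{(n^2-n)/2-9}<8$), whereas you parametrize by $(a,b)$ and minimize directly over pairs, landing on $q^{6}<16$ at $(a,b)=(2,3)$; your explicit check that $p\mid v$ and the aside about the spurious $\mathrm{PSL}_6(2)$ survivor are extra care not present in the paper but entirely in its spirit.
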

\begin{proof}
Suppose that $H$ is a $\mathcal{C}_{4}$ subgroup of type $\mathrm{GL}_i(q)\otimes \mathrm{GL}_{n/i}(q)$ with $1< i<\sqrt{n}$. In particular $n\ge6$.
    By \cite[Proposition 4.4.10]{kleidman1990subgroup}, we have
    \begin{align*}
        |H_0|=d^{-1}(i,n/i,q-1)\cdot q^{(i^2+n^2/i^2-i-n/i)/2}\prod\limits_{j=2}^i(q^j-1)\cdot\prod\limits_{j=2}^{n/i}(q^j-1).
    \end{align*}
The inequality \eqref{inequality|X|<|Out(X)^2....|} in Lemma \ref{lemma|X|<|Out|^2|H_0|....} yields
\[q^{n^2}<4f^2(q-1)^{-2}q^{2i^2+2n^2/i^2+i+n/i}\prod\limits_{j=1}^i(1-q^{-j})^3\cdot\prod\limits_{j=1}^{n/i}(1-q^{-j})^3\cdot\prod\limits_{j=1}^n(1-q^{-j})^{-1}.\]
By Lemma \ref{some_inequalities}(i), $\prod\limits_{j=1}^i(1-q^{-j})^3\cdot\prod\limits_{j=1}^{n/i}(1-q^{-j})^3\cdot\prod\limits_{j=1}^n(1-q^{-j})^{-1}<(1-q^{-1})^2$. Combining this with $f^2<2q$ we deduce that 
\[q^{n^2}<8q^{2i^2+2n^2/i^2+i+n/i-1}.\]
Let $f(i)=2i^2+2n^2/i^2+i+n/i-1$. This is a decreasing function on the interval $i \in (2,\sqrt{n})$, and hence $f(i)\le f(2)=(n^2+n)/2+9$. Therefore $q^{(n^2-n)/2-9}<8$, which is impossible for $n\ge6$.
\end{proof}

\begin{lemma}\label{gcd}
Assume Hypothesis \ref{hyp:3.1}. Then  $ H \notin \mathcal{C}_5$.
\end{lemma}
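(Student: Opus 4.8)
The plan is to rule out $\mathcal{C}_5$ by the order-comparison method already used in Lemmas \ref{C_2}--\ref{PSL:C4}. For $X=\mathrm{PSL}_n(q)$ the members of $\mathcal{C}_5$ are exactly the subfield subgroups of type $\mathrm{GL}_n(q_0)$, where $q=q_0^r$ for some prime $r$ (the form-preserving classical subgroups $\mathrm{Sp}_n$, $\mathrm{SU}_n$, $\mathrm{SO}_n^{\varepsilon}$ belong to $\mathcal{C}_8$ and are handled separately). First I would record from \cite[Proposition 4.5.3]{kleidman1990subgroup} the bound
$$|H_0|\le (q-1)\,q_0^{n(n-1)/2}\prod_{j=2}^{n}(q_0^{j}-1),$$
whose $p$-part is $q_0^{n(n-1)/2}$ and whose $p'$-part is at most $(q-1)\prod_{j=2}^{n}(q_0^{j}-1)$. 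Since $r\ge 2$ and $n\ge 3$, the $p$-part of $v=|X|/|H_0|$ equals $q_0^{(r-1)n(n-1)/2}>1$, so $p\mid v$ and Lemma \ref{lemma|X|<|Out|^2|H_0|....} is available.

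Next I would feed these orders into the inequality \eqref{inequality|X|<|Out(X)^2....|}, cancel the common powers of $q_0$, and use Lemma \ref{some_inequalities}(i) to bound the products $\prod_{j}(1-q_0^{-j})$ away from $0$ and $1$, exactly as in the proof of Lemma \ref{C_2}. Writing $f=rf_0$ with $q_0=p^{f_0}$, and comparing exponents of $q_0$, the inequality collapses to a relation of the form
$$r(n^2-4)<2n^2+n-3+2\log_{q_0}(2df),$$
the last term coming from the factor $|\mathrm{Out}(X)|_{p'}^2\le(2df)^2$. For $r=2$ this is never decisive, but Table \ref{tab:conditions} (the conclusion of \cite[Lemma 4.6]{devillers2021flag}, which remains valid here because $v<s^2$ is stronger than $v<2s^2$) already forces $n\le 4$; for $r\ge 3$ the displayed relation forces $n\in\{3,4\}$ together with a bounded range of $q_0$. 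In all cases the surviving triples $(n,q_0,r)$ form an explicit finite list.

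For each such triple I would compute $v$ exactly and invoke Lemma \ref{deisgns_divisibility}(iii): as $p\mid v$, the reduced replication number satisfies $r^*\mid |\mathrm{Out}(X)|_{p'}\cdot|H_0|_{p'}$, and also $r^*\mid v-1$. Evaluating the greatest common divisor $\bigl(v-1,\ |\mathrm{Out}(X)|_{p'}\,|H_0|_{p'}\bigr)$ and comparing with the bound $v<(r^*)^2$ of Lemma \ref{designs_property}(iv) should discard all but the smallest cases (those with $q_0=2$ and $n\in\{3,4\}$), which I would then settle by a direct computation in Magma \cite{MAGMAbosma1994handbook}, as was done for $\mathcal{C}_3$ in Lemma \ref{C_3}.

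The main obstacle is the largest subfield subgroup, $r=2$ with $n\in\{3,4\}$: here $v$ and $(r^*)^2$ are of the same order of magnitude, so the crude inequality \eqref{inequality|X|<|Out(X)^2....|} cannot by itself produce a contradiction, and everything hinges on the sharper divisibility $r^*\mid |\mathrm{Out}(X)|_{p'}|H_0|_{p'}$ combined with an accurate estimate of $(v-1,\,|H_0|_{p'})$. Controlling this gcd uniformly in $q_0$ is the delicate step, and the finitely many triples escaping the gcd estimate are precisely those needing explicit verification.
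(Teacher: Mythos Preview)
Your outline mirrors the paper's argument closely: the same subfield type from \cite[Proposition~4.5.3]{kleidman1990subgroup}, the same order inequality \eqref{inequality|X|<|Out(X)^2....|} sharpened by Lemma~\ref{some_inequalities}(i), and the same reduction via Table~\ref{tab:conditions} to $n\le 4$ when the subfield prime equals~$2$. (A notational caution: you write $r$ for that prime, which collides with the replication number; the paper calls it~$t$.)

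The one substantive difference lies in the endgame for the subfield prime $t=2$ with $n\in\{3,4\}$. You propose to control $r^*$ via $\gcd\bigl(v-1,\,|\mathrm{Out}(X)|_{p'}|H_0|_{p'}\bigr)$ alone and to mop up the smallest surviving $(n,q_0)$ by machine. The paper instead brings in Lemma~\ref{lem:2.4}: by \cite[Lemma~4.6]{devillers2021flag} the two-point stabilizer $G_{\alpha\beta}$ contains a copy of $\mathrm{SL}_{n-2}(q_0)$, so $r^*$ divides $2df|H_0|/|\mathrm{SL}_{n-2}(q_0)|$, which after stripping the $p$-part and the factors $2$, $q_0^2+1$, $q_0+1$ of $v$ collapses to the clean divisor $f(q_0^2-1)(q_0^3-1)$ when $n=4$, uniformly in~$q_0$; the inequality $v<(r^*)^2$ then fails by inspection, with no delicate gcd and no Magma. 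For $n=3$ the subgroup $\mathrm{SL}_1(q_0)$ is trivial, so the two-point-stabilizer trick yields nothing new and the paper does fall back on exactly the gcd computation you describe, carried out by hand after splitting on $(3,q_0+1)$. Thus your plan is workable, but the shortcut through Lemma~\ref{lem:2.4} is what keeps the $n=4$, $t=2$ case uniform and computer-free; without it you would need to prove your gcd estimate for all~$q_0$, not merely observe that it ``should'' hold.
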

\begin{proof}
Suppose that $H$ is a $\mathcal{C}_{5}$-subgroup of type $\mathrm{GL}_n(q_{0})$ with $q=q_{0}^{t}$ and $t$ prime. 
    According to \cite[Proposition 4.5.3]{kleidman1990subgroup}, we have 
 \[|H_0|=d^{-1}\left(n,(q-1)/(q_0-1)\right)\cdot q_0^{n(n-1)/2}\prod\limits_{j=2}^n(q_0^j-1).\]
Then the inequality (\ref{inequality|X|<|Out(X)^2....|}) in Lemma \ref{lemma|X|<|Out|^2|H_0|....} yields
\begin{align}\label{eq:3.6}
    q^{n(n-1)/2}\prod\limits_{j=1}^n(q^j-1)<4f^2q_0^{n(n-1)/2}\cdot(q-1)\left(n,(q-1)/(q_0-1)\right)^3\cdot\prod\limits_{j=2}^n(q_0^j-1)^3.
\end{align}
Since  $(1-q^{-1})^4<(1-q^{-1})^2\leq \prod\limits_{j=1}^n(1-q^{-j})$ by Lemma \ref{some_inequalities}(i), we have  $q^{n(n-1)/2}\prod\limits_{j=1}^n(q^j-1)>(q-1)^4q^{n^2-4}$.
Substituting this into the left-hand side of inequality~\eqref{eq:3.6}, we obtain
\begin{align*}
    (q-1)^4q^{n^2-4}<&4f^2q_0^{n(n-1)/2}\cdot(q-1)\left(n,(q-1)/(q_0-1)\right)^3\cdot\prod\limits_{j=2}^n(q_0^j-1)^3\\
    <&8q\cdot q_0^{n(n-1)/2}\cdot 8q^{-3}_0\cdot (q-1)^4\cdot q_0^{3(n^2+n-2)/2},
\end{align*}
 which yields $q^{(n^2-5)t-2n^2-n+6}_0<64$. Taking logarithms, we deduce
that
\begin{equation}\label{eq:3.7}
    (n^2-5)t-2n^2-n<0.
\end{equation}
We consider the following cases separately:

\textbf{Case 1:} $t\ge5$. From \eqref{eq:3.7}, we deduce that $n=3$. From  \eqref{eq:3.6} we see that 
\[q_0^{15}(q_0^{10}-1)(q_0^{15}-1)\le q^3(q^2-1)(q^3-1)<4f^2q_0^3\cdot 27(q_0^2-1)^3(q_0^3-1)^3\]
which is impossible.

\textbf{Case 2:} $t=3$. From \eqref{eq:3.7} we deduce that $n<5$. For $n=4$,  \eqref{eq:3.6} does not hold. Thus $n=3$. Since $q_0=p^{f/3}$, we have $3\mid f$. From \eqref{eq:3.6}, we see that 
\[q_0^6(q_0^6-1)(q_0^9-1)<108f^2(q_0^2-1)^3(q_0^3-1)^3<108f^2(q_0^6-1)(q_0^9-1).\]
Hence $q_0^6<108f^2$, which yields $q_0=2$ or $3$. Therefore $\left(3,(q-1)/(q_0-1)\right)=1$ and we use \eqref{eq:3.6} again to obtain a contradiction.

\textbf{Case 3:} $t=2$.  From Table \ref{tab:conditions},  we deduce that $n\le 4$. Moreover,
by the proof of \cite[Lemma 4.6]{devillers2021flag}, there exists a point $\beta$ of $\mathcal{D}$ such that $X_{\alpha\beta}$ contains a subgroup isomorphic to $\mathrm{SL}_{n-2}(q_0)$, and hence so does $G_{\alpha\beta}$. Applying Lemma \ref{lem:2.4}, it follows that $r^*$ divides $2df|H_0|/|\mathrm{SL}_{n-2}(q_0)|$. This implies that $r^*$ divides $2f\cdot(n,q_0+1)\cdot q_0^{2n-3}(q_0^n-1)(q_0^{n-1}-1)$. Since $2\mid v$ and $p\mid v$, we then conclude that $r^*$ divides
\begin{equation}\label{eq:3.6'}
     f\cdot(n,q_0+1)(q_0^n-1)(q_0^{n-1}-1).
\end{equation}

\textbf{Subcase 3.1:} $n=4$. Then $v=q_0^6(q_0^4+1)(q_0^3+1)(q_0^2+1)/(4,q_0+1)$. It follows from $q_0^2+1\mid v$, $q_0+1\mid v$ and  \eqref{eq:3.6'} that $r^*$ divides $f\cdot(q_0^2-1)(q_0^3-1).$ From $v<(r^*)^2,$ we obtain
$(q_0^4+1)(q_0^3+1)(q_0^2+1)<8q_0^6$, a contradiction.

\textbf{Subcase 3.2:} $n=3$. Then $v=q_0^3(q_0^3+1)(q_0^2+1)/(3,q_0+1)$. By \eqref{eq:3.6'}, $r^*$ divides $f\cdot (3,q_0+1)(q_0^3-1)(q_0^2-1)$.
We first consider the case $(3, q_0 + 1) = 3$ and the other case is analogous. 
In this case, $r^*$ divides $3f\cdot\left(3(v-1),(q_0^3-1)(q_0^2-1)\right)$. Now observe that   
\[\left(3(v-1),(q_0^3-1)(q_0^2-1)\right)=(2q_0^2+4q_0+5,q_0^3+2q_0^2+2q_0+1)\]
divides $(2q_0^2+4q_0+5,q_0-2)$ if $q_0\ne2$. Then   we conclude that $v<(r^*)^2$ fails, thus $q_0=2$. Now $v=120$ and $r^*$ divides $\left(v-1,3f\cdot(q_0^3-1)(q_0^2-1)\right)=(119,126)=1$, which  contradicts   $v<(r^*)^2$. This completes the proof.
\end{proof}

\begin{lemma}
    Assume Hypothesis \ref{hyp:3.1}. Then $H \notin \mathcal{C}_6$.
\end{lemma}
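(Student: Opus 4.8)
The plan is to assume, for contradiction, that $H$ is a $\mathcal{C}_6$-subgroup and to exploit how small such subgroups are. First I recall from \cite[Section~4.6]{kleidman1990subgroup} that a $\mathcal{C}_6$-subgroup of $X=\mathrm{PSL}_n(q)$ exists only when $n=r^m$ for some prime $r$ and $q=p$ is prime (so $f=1$), with $p$ odd; moreover $p\equiv 1\pmod r$ when $r$ is odd, while $r=2$ merely requires $q$ odd. In every case $H_0$ is, modulo scalars and a diagonal factor dividing $d$, the normalizer of a symplectic-type $r$-group, so $|H_0|$ divides $c\cdot r^{2m}\,|\mathrm{Sp}_{2m}(r)|$ with $c\le 4$, and, decisively, $|H_0|$ is coprime to $p$ (as $r\ne p$). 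Hence $p\mid v$ and $|H_0|=|H_0|_{p'}$, so Lemma~\ref{lemma|X|<|Out|^2|H_0|....} applies; since $|\mathrm{Out}(X)|_{p'}=2d$ here, it reduces to the master inequality $|X|<(2d)^2|H_0|^3$.

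Next I would bound both sides explicitly. By Lemma~\ref{some_inequalities}(i) one has $|X|=\tfrac1d q^{n^2-1}\prod_{j=2}^n(1-q^{-j})>\tfrac{1}{2d}\,q^{n^2-1}$, while $|\mathrm{Sp}_{2m}(r)|<r^{2m^2+m}$ gives $|H_0|<4\,r^{2m^2+3m}=4\,n^{2m+3}$ (using $r^m=n$); together with $d\le n$ these turn the master inequality into $q^{n^2-1}<512\,n^{6m+12}$. Since $q=p\ge 3$ and $m\le\log_2 n$, the left-hand side eventually dominates, and a short check shows the inequality can hold only for $n\in\{3,4,5\}$. The value $n=5$ forces $r=5$ and $q\equiv 1\pmod 5$, hence $q\ge 11$; re-running the master inequality with $q\ge 11$ then eliminates it, leaving the two cases $n=3$ (with $r=3$, $q\equiv 1\pmod 3$) and $n=4$ (with $r=2$, $q$ odd).

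For these two cases I would substitute the exact order of $H_0$ from \cite{kleidman1990subgroup} back into the master inequality to bound $q$: this yields $q\in\{7,13\}$ when $n=3$ and $q\in\{3,5,7\}$ when $n=4$. For each of the finitely many resulting pairs I compute $v=|X|/|H_0|$ and note that, because $p\mid v$, Lemma~\ref{deisgns_divisibility}(iii) forces $r^*$ to divide $\gcd\!\big(v-1,\,2d|H_0|\big)$; a direct evaluation shows this gcd is a tiny divisor (for instance it equals $3$ when $n=3$, $q=7$), so that $v<(r^*)^2$ from Lemma~\ref{designs_property}(iv) fails spectacularly. Alternatively, the nonexistence of a flag-transitive design can be confirmed directly with Magma \cite{MAGMAbosma1994handbook} for each pair, exactly as in the proof of Lemma~\ref{C_3}.

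I expect the only genuinely delicate point to be extracting the precise orders and existence conditions of the $\mathcal{C}_6$-subgroups from \cite{kleidman1990subgroup}, especially the $r=2$ symplectic-type groups whose structure depends on $q\bmod 4$, and the scalar/diagonal factors relating the $\mathrm{GL}_n(q)$-level normalizer to $H_0\le X$. The saving grace is that the concluding gcd estimate is insensitive to these small constants: only the coarse magnitude of $|H_0|$ enters the reduction to finitely many cases, and in the final cases the gcd remains far below $\sqrt v$ regardless of the exact value of $|H_0|$.
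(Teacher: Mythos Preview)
Your approach is essentially the same as the paper's: bound $|H_0|$ via the $\mathcal{C}_6$ structure, feed this into the inequality of Lemma~\ref{lemma|X|<|Out|^2|H_0|....} to reduce to finitely many pairs $(n,q)$, and then finish by showing that $\gcd(v-1,\,2df|H_0|)$ is far too small for $v<(r^*)^2$. The paper parameterises by $(t,m)$ with $n=t^m$, obtains $t^{2m}-6m^2-9m<8$ and hence $m\le 3$, then treats $m=1$ in detail (arriving at exactly your $n=3$, $q\in\{7,13\}$, with the same $\gcd=3$ contradiction) and says the remaining $m$ are similar; your parameterisation by $n$ is just a cosmetic variant.

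One inaccuracy worth flagging: your blanket claim that $q=p$ is prime (so $f=1$) and that $p$ is odd is not what \cite[Propositions~4.6.5--4.6.6]{kleidman1990subgroup} actually gives. The paper records the correct condition, namely that $f$ is odd and minimal with $t(2,t)\mid p^f-1$; for instance $t=7$, $p=2$, $f=3$ is admissible in principle. Consequently, in your master inequality the factor $|\mathrm{Out}(X)|_{p'}^2$ should be bounded by $(2df)^2$ rather than $(2d)^2$. This does not affect the outcome---the extra factor $f^2<2q$ is absorbed harmlessly, the same short list of $(n,q)$ survives, and in every surviving case one indeed has $f=1$---but you should state the existence conditions correctly before invoking the bound.
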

\begin{proof}
    Suppose that $H$ is a $\mathcal{C}_6$ subgroup of type  $t^{2m}.\mathrm{Sp}_{2m}(t)$, where  $n=t^m$  for some prime  $ t \neq p $  and positive integer  $ m $. Moreover,  $f$  is odd and minimal such that  $ t(2, t) $  divides  $ q-1 = p^f - 1$. From \cite[Propositions 4.6.5 and 4.6.6]{kleidman1990subgroup},  we have \[|H_0|\le t^{2m}|\mathrm{Sp}_{2m}(t)|\le  t^{2m^2+3m}<q^{2m^2+3m}.\]
    As $f^2<2q$, the inequality (\ref{inequality|X|<|Out(X)^2....|}) in Lemma \ref{lemma|X|<|Out|^2|H_0|....} yields that $|X|<4d^2f^2|H_0|^3<8q^{6m^2+9m+3}$.
    Combining this with the fact that $|X|>q^{n^2-2}$, we obtain $q^{t^{2m}-6m^2-9m-5}<8$ and so 
    \begin{equation}\label{eq:3}
         t^{2m}-6m^2-9m<8.
    \end{equation}
     As $t\ge2$, we deduce that $m\le3$.  We   provide  details only for  $m=1$, and the other cases are similar.
For $m=1$, it follows from (\ref{eq:3}) that $t^2\le22$, and hence $t=n=3$. Then $|H_0| \leq 3^{2} |\mathrm{Sp}_{2}(3)|=2^3 \cdot 3^3$. Since $t(2,t)$ divides $q-1$, we obtain $d=(n,q-1)=3$. Using the inequality $ q^{n^2-2} < |X| < 4(df)^2 |H_0|^3 $ we obtain $q^7 < f^2 \cdot 6^{11}$.
Combining this inequality with that $f$  is the minimal odd integer such that $ t(2,t) = 3 $  divides  $ p^f - 1 $, we have $q=7$ or $13$. According to \cite[Proposition 4.6.5]{kleidman1990subgroup}, we have $H_0\cong 3^2.\mathrm{Q}_8$. By Lemma \ref{deisgns_divisibility}(i), $r^*$ divides $2df|H_0|=432$. Hence $r^*$ divides $R=(432,v-1)$. Direct calculations show that $R=3$ for both $q=7$ and $q=13$. But then $R^2<v$, a contradiction.
\end{proof}

\begin{lemma}
    Assume Hypothesis \ref{hyp:3.1}. Then  $ H \notin \mathcal{C}_7$.
\end{lemma}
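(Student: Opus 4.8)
The plan is to reuse the order-comparison method already applied to the classes $\mathcal{C}_2$, $\mathcal{C}_4$ and $\mathcal{C}_6$. A $\mathcal{C}_7$-subgroup is supported on a tensor-induced decomposition $V = V_1\otimes\cdots\otimes V_t$ with each $\dim V_i = m$, so that $n = m^t$ with $m\ge 3$ and $t\ge 2$; in particular $n\ge 9$. By \cite[Proposition 4.7.3]{kleidman1990subgroup}, $H$ has type $\mathrm{GL}_m(q)\wr\mathrm{S}_t$, and $|H_0|$ divides a bounded multiple of $|\mathrm{GL}_m(q)|^t\,t!/(q-1)^{t-1}$. Since $|\mathrm{GL}_m(q)| < q^{m^2}$, this already gives the crude but entirely sufficient bound $|H_0| < q^{tm^2}\,t!$; the full product estimates of Lemma \ref{some_inequalities} are not even needed here, because the gap between $|X|$ and $|H_0|^3$ will turn out to be enormous.

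First I would confirm that $p\mid v$, so that Lemma \ref{lemma|X|<|Out|^2|H_0|....} is available. Here $|X|_p = q^{n(n-1)/2}$, while $|H_0|_p$ equals $q^{tm(m-1)/2}$ up to the small $p$-part of $t!$; since $n(n-1) = m^t(m^t-1) > t\,m(m-1)$ whenever $n = m^t\ge 9$, the $p$-part of $v = |X|/|H_0|$ exceeds $1$. Hence I may invoke inequality \eqref{inequality|X|<|Out(X)^2....|}, which together with $|H_0|_{p'}\le|H_0|$ and $|\mathrm{Out}(X)| = 2df$ gives $|X| < (2df)^2|H_0|^3$. (Alternatively, the weaker bound $|X| < |\mathrm{Out}(X)|^2|H_0|^3$ coming from $v<(r^*)^2$ and Lemma \ref{deisgns_divisibility}(iv) would also suffice, given how wide the final margin is.)

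Next I would substitute the lower bound $|X| > q^{n^2-2}$ (exactly as in the $\mathcal{C}_6$ argument), the upper bound $|H_0| < q^{tm^2}t!$, together with $d\le n$ and $f^2 < 2q$, to obtain
\[
q^{n^2-2} < (2nf)^2\bigl(q^{tm^2}\,t!\bigr)^3 < 8n^2 q\cdot q^{3tm^2}(t!)^3,
\]
that is, $q^{m^{2t}-3tm^2-3} < 8n^2(t!)^3$. Taking logarithms to base $2$ and using $q\ge 2$, $\log_2(t!)\le t\log_2 t$ and $n=m^t$ reduces the entire lemma to the single numerical inequality
\[
m^{2t}-3tm^2-3 > 3 + 2t\log_2 m + 3t\log_2 t \qquad (m\ge 3,\ t\ge 2).
\]

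The main, and essentially only, obstacle is to verify this last inequality uniformly in $(m,t)$. This is a place to be careful rather than clever: the left-hand side is $m^{2t} = (m^2)^t \ge 9^t$, growing geometrically in $t$ and polynomially of degree $2t$ in $m$, whereas the right-hand side grows only polynomially in $m$ and like $t\log t$ in $t$. I would therefore check the extremal case $m=3$, $t=2$ directly, where the left side equals $81-54-3 = 24$ and the right side is below $16$, and then record monotonicity: increasing $m$ with $t$ fixed enlarges the leading term $m^{2t}$ far faster than the $O(\log m)$ right-hand side, while increasing $t$ by $1$ multiplies $m^{2t}$ by $m^2\ge 9$ — an additive jump of at least $8\cdot 81$ in the base case — against only an $O(\log)$ increase on the right. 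Hence the gap widens throughout the range, the displayed inequality is strict for all admissible $(m,t)$, and the resulting contradiction shows that no $\mathcal{C}_7$-subgroup can occur.
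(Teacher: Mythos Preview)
Your proof is correct and follows essentially the same approach as the paper: both bound $|X|>q^{n^2-2}$ from below and $|H_0|$ from above by roughly $q^{tm^2}t!$, feed these into the inequality $|X|<(2df)^2|H_0|^3$ from Lemma~\ref{lemma|X|<|Out|^2|H_0|....}, and reduce to an elementary inequality in $(m,t)$ that fails for all $m\ge 3$, $t\ge 2$. The only differences are cosmetic bookkeeping---the paper uses the slightly sharper $|H_0|<q^{t(m^2-1)}t!$ and $d<q$ (rather than your $d\le n$), and specializes immediately to $m=3$ before checking the final inequality in $t$ alone---while you additionally make explicit the verification that $p\mid v$, which the paper tacitly assumes.
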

\begin{proof}
Suppose that $H$ is a $\mathcal{C}_7$ subgroup of type $\mathrm{GL}_m(q)\wr \mathrm{S}_t$, where $m\geq3$, $t\geq2$ and $n=m^t$. From \cite[Proposition 4.7.3]{kleidman1990subgroup}, we deduce that $|H_0|<q^{t(m^2-1)}\cdot (t!)$. Then the inequality (\ref{inequality|X|<|Out(X)^2....|}) in Lemma \ref{lemma|X|<|Out|^2|H_0|....} yields $|X|<8q^{3t(m^2-1)+3}(t!)^3$.  Combining this with the inequality $|X|>q^{n^2-2}$, we obtain 
\[2^{3^{2t}-24t-5}\leq q^{m^{2t}-3t(m^2-1)-5} \leq 8(t!)^3.\] Thus we derive that
$3^{2t}-24t-8<3t\log_2t$, which is impossible for $t\ge2$.
\end{proof}

\begin{lemma}\label{our method}
    Assume Hypothesis \ref{hyp:3.1}. Then $ H \notin \mathcal{C}_8 $
\end{lemma}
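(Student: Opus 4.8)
The plan is to proceed exactly as in the preceding lemmas for $\mathcal{C}_2$ through $\mathcal{C}_7$: exploit the fundamental inequality $v<(r^*)^2$ together with the order estimate $|X|<|\mathrm{Out}(X)|_{p'}^2\cdot|H_0|\cdot|H_0|^2_{p'}$ from Lemma \ref{lemma|X|<|Out|^2|H_0|....}. The family $\mathcal{C}_8$ consists of the classical-form subgroups, so by \cite[Table 4.8.A and Propositions 4.8.3--4.8.5]{kleidman1990subgroup} the subgroup $H_0$ is (up to the usual scalars) of one of three types: the symplectic type $\mathrm{Sp}_n(q)$ (requiring $n$ even), the orthogonal type $\mathrm{SO}^{\epsilon}_n(q)$, or the unitary type $\mathrm{SU}_n(q_0)$ with $q=q_0^2$. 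First I would record, for each type, the exact order $|H_0|$ and the outer automorphism data $|\mathrm{Out}(X)|=2df$.

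The key observation driving the bound is that in every $\mathcal{C}_8$ case $|H_0|$ is dominated by roughly $q^{n(n-1)/2}$ or $q_0^{n^2/2}$, whereas $|X|\approx q^{n^2-1}$. Concretely, for the symplectic and orthogonal types one has $|H_0|\lesssim q^{n(n+1)/2}$ up to a bounded factor, and for the unitary type $|H_0|=|\mathrm{SU}_n(q_0)|\cdot(\text{scalars})\lesssim q_0^{n^2}=q^{n^2/2}$. Feeding these into \eqref{inequality|X|<|Out(X)^2....|} and applying Lemma \ref{some_inequalities}(i) to absorb the product terms $\prod(1-q^{-j})$ into a constant near $1$, together with $f^2<2q$, I expect to reduce to an inequality of the form $q^{c\,n^2-c'n-c''}<(\text{small constant})$ with $c>0$. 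For the unitary case the exponent of $q$ grows like $n^2/2-O(n)$, and for the symplectic/orthogonal cases like $n^2/2-O(n)$ as well; in all cases the exponent is positive and large for $n\ge3$, forcing no solutions except possibly a handful of tiny $(n,q)$ pairs.

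The residual small cases are where the real work lies, and this is the step I expect to be the main obstacle. After the crude order estimate eliminates all but finitely many $(n,q)$, I would handle the survivors by the refined divisibility arguments used repeatedly above: compute $v=|X|/|H_0|$ explicitly, identify the primes (such as $p$, $q-1$, and factors like $q+1$ or $q^2+1$) dividing $v$ and hence coprime to $r^*$ by Lemma \ref{deisgns_divisibility}(iii), and then show via Lemma \ref{deisgns_divisibility}(i) that $r^*$ divides $R=(v-1,\,|\mathrm{Out}(X)|_{p'}\cdot|H_0|_{p'})$, a quantity too small to satisfy $v<(r^*)^2\le R^2$. Where a surviving case cannot be excluded by a clean gcd computation alone, I would invoke Magma \cite{MAGMAbosma1994handbook} as in Lemma \ref{C_3} to verify directly that no flag-transitive design satisfying Hypothesis \ref{hyp:3.1} exists. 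The main subtlety will be bookkeeping the scalar factors $(n,q-1)$, $(n,q_0-1)$ and the precise structure of the orthogonal cases (where $\epsilon=\pm$ and the parity of $n$ split into subcases), ensuring the order bounds remain valid across all of them.
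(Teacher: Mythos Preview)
Your proposal has a genuine gap: the crude order inequality \eqref{inequality|X|<|Out(X)^2....|} from Lemma~\ref{lemma|X|<|Out|^2|H_0|....} is \emph{useless} for $\mathcal{C}_8$ subgroups, and your asymptotic claim that the exponent of $q$ ``grows like $n^2/2-O(n)$'' is simply wrong. Take the symplectic case as an example: $|H_0|_p=q^{n^2/4}$ while $|H_0|_{p'}\approx q^{n(n+2)/4}$, so $|H_0|\cdot|H_0|_{p'}^2\approx q^{n^2+3n/2}$, which \emph{exceeds} $|X|\approx q^{n^2-1}$ for all $n$. The same phenomenon occurs in the orthogonal and unitary cases. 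The reason is structural: the $\mathcal{C}_8$ subgroups are precisely the large classical-form subgroups of $\mathrm{PSL}_n(q)$, of index only about $q^{n^2/4}$ or $q^{n^2/2}$, far too big for an inequality of the form $|X|<|H_0|^3$ (or any variant thereof) to bite. This is in sharp contrast to $\mathcal{C}_2$--$\mathcal{C}_7$, where $|H_0|$ is genuinely small relative to $|X|$.

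The paper's proof uses a completely different mechanism that you have overlooked: Lemma~\ref{lem:2.4}, the two-point stabilizer bound. For each $\mathcal{C}_8$ type and $n$ not too small, the proofs of \cite[Lemmas~4.9--4.11]{devillers2021flag} exhibit a second point $\beta$ for which $G_{\alpha\beta}$ contains a large classical subgroup~$N$ (namely $\mathrm{Sp}_{n-4}(q)$, $\mathrm{SO}^{\epsilon}_{n-2}(q)$, or $\mathrm{SU}_{n-2}(q)$ respectively). Lemma~\ref{lem:2.4} then forces $r^*$ to divide $|\mathrm{Out}(X)|\cdot|H_0|/|N|$, a quantity of size only about $q^{O(n)}$ rather than $q^{O(n^2)}$. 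Combined with the coprimality of $r^*$ to the obvious factors of $v$ (such as $p$, $2$, $q-1$), this yields $v<(r^*)^2$ failing for all but a handful of tiny $(n,q)$, which are then dispatched by gcd computations and Magma. Your plan never invokes Lemma~\ref{lem:2.4}, and without it there is no way to bound $r^*$ sharply enough; the ``refined divisibility arguments'' you sketch for the residual cases cannot even get started, because the crude step produces no residual cases at all---it eliminates nothing.
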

\begin{proof}
Let $H$ be a $\mathcal{C}_{8}$ subgroup. Then three cases arise.

 \textbf{Case 1:} Suppose that $H$ is of type $\mathrm{Sp}_n(q)$, where $n$ is even and $n\geq4$. By \cite[Proposition 4.8.3]{kleidman1990subgroup}, we compute
 $$
\left|H_0\right| = d^{-1}(n/2,q-1)q^{n^2/4} (q^n - 1) (q^{n-2} - 1) \cdots (q^2 - 1) ,
$$ 
 and so
 $$
v = \frac{|X|}{|H_0|} = \frac{q^{(n^2 - 2n)/4} (q^{n-1} - 1) (q^{n-3} - 1) \cdots (q^3 - 1)}{(n/2,q-1)}.
$$ 
Observing that $p\mid v$ and $r^*$ divides $2df|H_0|$, then $r^*_p=1$ and $r^*$ divides
\begin{equation}\label{eq:divides}
    2f\cdot(q^n - 1) (q^{n-2} - 1) \cdots (q^2 - 1)\cdot(n/2,q-1).
\end{equation}
We consider the following cases:

\textbf{Subcase 1.1:} $n=4$. Then $v=q^2(q^3-1)/(2,q-1)$ and $r^*$ divides $2f\cdot(q^4-1)(q^2-1)(2,q-1)$ from \eqref{eq:divides}. If $q$ is even, by calculating $\left(v-1,2f\cdot(q^4-1)(q^2-1)\right)$, we deduce  $r^*\mid  f\cdot(q^5-q^2-1,q^3+q-1)$ and so $r^*\mid 9f$. According to $v<(r^*)^2$, we have $q=2$. Then a Magma \cite{MAGMAbosma1994handbook} computation shows that the subdegrees of $G$ are 12 and 15, so by Lemma \ref{deisgns_divisibility}(ii) $r^*$ divides $3$, contradicting $v<(r^*)^2$. In the case where $q$ is odd, we have $(q-1)/2$ divides $v$. Therefore, $r^*$ divides $16f(q+1)^2(q^2+1)$. By calculating $a:=\left(2(v-1),16f(q+1)^2(q^2+1)\right)$, we deduce  that $r^*$ divides $16f(q-11)$ if $q\ne 11$. From $v<(r^*)^2$, it follows that $q\in\{3,5\}$. For $q\in\{3,5,11\}$, we recompute $a$ for each $q$ and find  $v>a^2$, a contradiction.

\textbf{Subcase 1.2:} $n\ge6.$ By the proof of \cite[Lemma 4.9]{devillers2021flag}, there exists a point $\beta$ of $\mathcal{D}$ such that $X_{\alpha\beta}$ contains a subgroup isomorphic to $\mathrm{Sp}_{n-4}(q)$, and hence so does $G_{\alpha\beta}$. By Lemma \ref{lem:2.4}, $r^*$ divides $2df|H_0|/|\mathrm{Sp}_{n-4}(q)|$, that is,
\begin{equation}\label{eq:|}
    2(n/2,q-1)f\cdot q^{2n-4}(q^n-1)(q^{n-2}-1).
\end{equation}
Since $2$, $p$ and $q-1$ are all divisors of $v$, then $r^* \mid v-1$ and (\ref{eq:|}) shows that $r^*$ divides
\[f\cdot (q^n-1)(q^{n-2}-1)/(q-1)^2.\]
By $v<(r^*)^2$, we obtain $n\le8$. Using an analogous argument to \textbf{Subcase 1.1}, we find that both $n=8$ and $n=6$ yield contradictions to  $v<(r^*)^2$.

\textbf{Case 2:}    Suppose that $H$ is of type $\mathrm{O}_n^\epsilon(q)$, where $\epsilon\in\{\circ,\pm\}$ and $q$ is odd. Then by \cite[Proposition 4.8.4]{kleidman1990subgroup}, $H_0\cong \mathrm{PSO}^{\epsilon}_n(q).(n,2)$. By the proof of \cite[Lemma 4.10]{devillers2021flag}, there exists a point $\beta$ of $\mathcal{D}$ such that $X_{\alpha\beta}$ contains a subgroup isomorphic to $\mathrm{SO}^{\epsilon}_{n-2}(q)$, and hence so does $G_{\alpha\beta}$. It follows from Lemma \ref{lem:2.4} that $r^*$ divides $2df|H_0|/|\mathrm{SO}^{\epsilon}_{n-2}(q)|$. Combined with $v < (r^*)^2$, this condition eliminates all possibilities in this case, 
as demonstrated in the analogous proof of \cite[Lemma 4.10]{devillers2021flag}.


\textbf{Case 3:}    Suppose that $H$ is of type $\mathrm{U}_n(q_0)$, where $q=q_0^2$. Then by \cite[Proposition 4.8.5]{kleidman1990subgroup}, we obtain   
    \[|H_0|=d^{-1}c\cdot q_0^{n(n-1)/2}\prod\limits_{j=2}^n(q_0^j-(-1)^j),\] 	where $c=(q-1)/\operatorname{lcm}(q_{0}+1,(q-1)/d)=(n,q_{0}-1)$. Hence
     \[v=\frac{|X|}{|H_0|}=\frac{1}{c}q_0^{n(n-1)/2}\prod\limits_{j=2}^n(q_0^j+(-1)^j),\]
which implies $p\mid v$ and $v$ is even. It follows from Lemma \ref{deisgns_divisibility}(i) that $r^*$ divides
\[(n,q_0-1)f\cdot\prod\limits_{j=2}^n(q_0^j-(-1)^j).\] 
\indent \textbf{Subcase 3.1:} If $n=3$, then
 $v=q_0^3(q_0^3-1)(q_0^2+1)/(3,q_0-1)$ and $r^*\mid (3,q_0-1)f(q_0^3+1)(q_0^2-1)$. Following an analogous argument to  \textbf{Subcase 3.2} of Lemma \ref{gcd}, we can rule out this case.

\textbf{Subcase 3.2:} If $n \ge4$, then by the proof of \cite[Lemma 4.11]{devillers2021flag},  there exists a point $\beta$ of $\mathcal{D}$ such that $X_{\alpha\beta}$ contains a subgroup isomorphic to $\mathrm{SU}_{n-2}(q)$, and hence so does $G_{\alpha\beta}$. It follows from Lemma \ref{lem:2.4} that $r^*$ divides $2df|H_0|/|\mathrm{SU}_{n-2}(q)|$, and so
$r^*\mid cf(q_0^n-(-1)^n)(q_0^{n-1}-(-1)^{n-1})$.
This case can be excluded by an argument analogous to that in \textbf{Case 1}, and we omit the details.
\end{proof}


\begin{lemma}\label{S}
    Assume Hypothesis \ref{hyp:3.1}. Then  $ H \notin \mathcal{S}$.
\end{lemma}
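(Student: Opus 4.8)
The plan is to dispatch class $\mathcal{S}$ by first imposing a uniform order bound to reduce to finitely many dimensions, and then clearing out the small-dimensional survivors. Recall that $H\in\mathcal{S}$ means $H_0=H\cap X$ is almost simple and acts absolutely irreducibly on the natural $n$-dimensional module while stabilising none of the geometric configurations underlying $\mathcal{C}_1,\dots,\mathcal{C}_8$. The one external ingredient I would quote is Liebeck's bound on the order of a maximal subgroup in class $\mathcal{S}$: for socle $X=\mathrm{PSL}_n(q)$ one has $|H|<q^{3n}$; I would pair this with the standard lower bound $|X|>q^{n^2-2}$.

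First I would feed these into the design inequality. Lemma~\ref{deisgns_divisibility}(iv) gives $\lambda|G|<|H|^3$, so from $X\le G$ and $\lambda\ge 1$ we get $|X|\le|G|<|H|^3<q^{9n}$. Comparing with $|X|>q^{n^2-2}$ forces $q^{n^2-2}<q^{9n}$, that is $n^2-9n-2<0$, whence $n\le 9$. Thus class $\mathcal{S}$ can survive only in the bounded range $3\le n\le 9$.

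The remaining step is a finite check, organised by dimension. For each $n$ with $3\le n\le 9$ I would list the candidates for $H_0$ from the explicit classification of maximal $\mathcal{S}$-subgroups of low-dimensional classical groups (Bray--Holt--Roney-Dougal), recording $|H_0|$ as a function of $q$. For a fixed candidate I would first observe that $p\mid v$: since $H_0$ is absolutely irreducible it cannot contain a full Sylow $p$-subgroup of $X$ (the latter stabilises a $1$-space, hence acts reducibly), so $|H_0|_p<|X|_p$ and $p\mid v=|X|/|H_0|$. Hence $(r^*,p)=1$, and Lemma~\ref{deisgns_divisibility}(iii) gives $r^*\mid|\mathrm{Out}(X)|_{p'}\cdot|H_0|_{p'}$; intersecting with $r^*\mid v-1$ and comparing against $v<(r^*)^2$ (Lemma~\ref{deisgns_divisibility}(ii)) produces a contradiction. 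For the fixed-order subgroups (alternating, sporadic and cross-characteristic covers such as $A_6$, $3\cdot A_6$, $\mathrm{PSL}_2(7)$, $A_7$) the cruder bound $r^*\le|\mathrm{Out}(X)|\,|H_0|$ already contradicts $v<(r^*)^2$ once $q$ exceeds a small threshold, so only finitely many pairs $(n,q)$ remain; these I would settle directly in Magma, confirming that none supports a flag-transitive design with $\lambda\ge(r,\lambda)^2>1$.

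The main obstacle is this finite analysis rather than the uniform reduction. The point is that the crude order bound is consistent with $v<(r^*)^2$ throughout $3\le n\le 9$, so the contradiction genuinely relies on stripping the $p$-part from $|H_0|$ and intersecting the resulting divisor with $v-1$; this is exactly where the defining-characteristic members of $\mathcal{S}$ (whose order grows with $q$) are delicate, since one must control the cancellation in $\gcd\bigl(v-1,\,|\mathrm{Out}(X)|_{p'}|H_0|_{p'}\bigr)$ uniformly in $q$. The fixed-order subgroups in dimensions $3$ and $4$ at the smallest $q$ are the ones that genuinely require machine verification, and I would expect these, together with the book-keeping for the Lie-type families, to absorb most of the effort.
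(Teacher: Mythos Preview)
Your approach matches the paper's: both reduce to $n\le 9$ via Liebeck's bound $|H|<q^{3n}$ combined with $\lambda|G|<|H|^3$, then run through the $\mathcal{S}$-candidates from the Bray--Holt--Roney-Dougal tables and eliminate each by comparing $v$ against $\bigl(v-1,\,|\mathrm{Out}(X)|\cdot|H_0|\bigr)^2$. The paper additionally sharpens the range to $n\le 7$ using Liebeck's Corollary~4.3, disposes of the single defining-characteristic family $\mathrm{PSL}_3(q)<\mathrm{PSL}_6(q)$ by a pure order comparison (no gcd needed), and finds that only $(\mathrm{PSL}_4(2),A_7)$ with $v=8$ survives to the final arithmetic check---but the overall strategy is exactly as you describe.
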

\begin{proof}
Suppose that $H$ is an $\mathcal{S}$-subgroup. Then by Lemma \ref{deisgns_divisibility}(iv), we obtain
\[
q^{n^2-2} < |X| \leq |G| < |H|^3.
\]
Moreover, by \cite[Theorem 4.1]{liebeck1985orders}, we conclude   $q^{n^2-2}<|H|^3< q^{9n}$, yielding  $n\leq9$. Further, it follows from \cite[Corollary 4.3]{liebeck1985orders} that either $n=x(x-1)/2$ for some integer $x$ or $|H|<q^{2n+4}$. Both cases lead to $n\le7$. For $n \leq 7$, the possibilities for $H_0$ can be read off from the tables in \cite[Chapter 8]{bray2013maximal}.  It follows from Lemma \ref{lemma|X|<|Out|^2|H_0|....}  that $q^{n^2-2} < |X| < 4d^2f^2\cdot |H_0|^3$. Since $d\leq n$, we obtain 
\begin{equation}\label{s_subgroup}
   q< (4f^2n^2|H_0|^3)^{1/(n^2-2)}.
\end{equation}
Using this inequality and the conditions on $q$,  we list all pairs $(X,H_0)$ that satisfy the inequality in Table \ref{tab:S_subgroup0}. The conditions in the fourth column can be obtained from \cite{bray2013maximal}. For Line 8,  Lemma \ref{lemma|X|<|Out|^2|H_0|....} gives $q^{34}< |\mathrm{PSL}_6(q)| < 4d^2f^2\cdot |\mathrm{PSL}_3(q)|^3<144q^{26}$, which is impossible. For each of the remaining cases listed in Lines 1-7 of Table \ref{tab:S_subgroup0}, we verify whether the inequality $v<\left(v-1,|\mathrm{Out}(X)|\cdot|H_0|\right)^2$ is valid.   Only the case   $q=2$  in Line 4 satisfies this inequality.   If $X=\mathrm{PSL}_4(2)$ and $H_0\cong \mathrm{A}_7$, then $v=8$ and $r^*$ divides $2df|H_0|=5040$. Therefore, $r^*\mid (7,5040)=7$. From $v<(r^*)^2$, we deduce that $r^*=7$. Following calculations similar to those in Lemma \ref{C_3}, we confirm that such $2$-designs do not exist.
\end{proof}

\begin{table}[H]
\caption{\text{The pairs $(X,H_0)$}}
\label{tab:S_subgroup0}
\centering
\begin{tabular}{lllll}
\toprule
Line & $X$ & $H_0$ & \text{Conditions on } $q$ & \text{Restriction on } $q$\\
\midrule
        1  & $\mathrm{PSL}_3(q)$  & $\mathrm{PSL}_2(7)$  & $q=p\equiv 1,2,4 \pmod  7$, $q\neq2$  & $q=11$ \\
        2  &             & $\mathrm{A}_6$    &  $q=p\equiv 1,4 \pmod{15}$    &  $q=19$ \\
        3  &             & $\mathrm{A}_6 $  & $q=p^2$, $p\equiv 2,3\pmod5$  & $q=4$ \\
       
\midrule
         4& $\mathrm{PSL}_4(q)$  & $\mathrm{A}_7$ &$q=p\equiv 1,2,4 \pmod 7$ & $q=2$\\
         5&   & $\mathrm{PSU}_4(2)$ & $q=p\equiv 1\pmod6$ & $q=7$\\
         
\midrule

         6&  $\mathrm{PSL}_5(q)$  & $\mathrm{M}_{11}$    & $q=3$ &  $q=3$ \\
      
\midrule
      
      7  &$\mathrm{PSL}_6(q)$& $\mathrm{M}_{11}$    & $q=3$ &  $q=3$  \\
      8  && $\mathrm{PSL}_3(q)$  &$q$ odd & \\

\bottomrule
\end{tabular}
\end{table}

\section{Proof of Theorem \ref{main} for $\mathrm{soc}(G)=\mathrm{PSU}_n(q)$}\label{s4}
In this section, we assume the following hypothesis.

\begin{hypothesis}
	\label{Hypothesis}
	Let \( \mathcal{D} \) be a non-trivial \( 2\text{-}(v,k,\lambda) \) design with \( \lambda \ge (r,\lambda)^2 > 1 \), admitting a flag-transitive automorphism group \( G \) whose socle is \( X = \mathrm{PSU}_n(q) \), where \( n \ge 3 \) and \( q = p^f \) for some prime \( p \) and positive integer \( f \), and $(n,q) \neq (3,2)$. Let \( d = (n, q+1) \), and let \( \alpha \) be a point of \( \mathcal{D} \), with \( H = G_{\alpha} \) and \( H_0 = H \cap X \).
\end{hypothesis}

    Following the same approach as in the case \( X = \mathrm{PSL}_n(q) \), we first recall some useful results from previous literature concerning the bounds \( v < s^2 \) or \( v < R^2 \),  which remain applicable here.  We summarize these results in Table \ref{tab:PSU_conditions}. 

\begin{table}[H]
	\caption{Conclusions  derived by $v<s^2$ or $v<R^2$ }
    \label{tab:PSU_conditions}
	\centering
	\begin{tabular}{llll}
		\toprule
		Class & Type of $H$ &  Conclusion & References\\
		\midrule
		$\mathcal{C}_1$ & $P_i$ & $n=3$ & \cite[Section 6(1)]{saxl2002finite} \\
            $\mathcal{C}_1$ & $N_i$ & $i=1$ & \cite[Proposition 4.3]{alavir} \\
            $\mathcal{C}_2$ & $\mathrm{GU}_m(q)\wr \mathrm{S}_{t}$ & $m=1$ & \cite[Proposition 4.3]{alavir} \\
            $\mathcal{C}_2$ & $\mathrm{GL}_{n/2}(q^2).2$ & $n=4$ & \cite[Proposition 4.3]{alavir} \\
		\bottomrule
	\end{tabular}
\end{table}

In the case \( X = \mathrm{PSL}_n(q) \), we frequently utilize the inequality \eqref{inequality|X|<|Out(X)^2....|} 
from Lemma \ref{lemma|X|<|Out|^2|H_0|....} for parameter estimation. 
For \( X = \mathrm{PSU}_n(q) \), we apply the established framework from the classification 
of flag-transitive \(2\)-designs with \((r,\lambda)=1\) admitting an almost simple group \(G\) 
with socle \(X = \mathrm{PSU}_n(q)\) \cite{alavir}. In particular, \cite[Lemma 3.7]{alavir} 
yields precisely the same inequality as (\ref{inequality|X|<|Out(X)^2....|}), 
and their methodology extends naturally to the remaining cases under our conditions. 
This allows us to directly utilize the classification results from \cite{alavir} 
to eliminate certain types of $H$. Additionally, we leverage results from \cite{alavi2024classical}, which established an alternative bound for flag-transitive \(2\)-\((v,k,2)\) designs with socle \( X \ne \mathrm{PSL}_n(q) \). Specifically, \cite[Lemma 2.11]{alavi2024classical} shows that $|X| < 2|\mathrm{Out}(X)|^2\cdot |H_0| \cdot |H_0|_{p'}^2$. Our general bounds (\ref{inequality|X|<|Out(X)^2....|}) imply those used in \cite{alavi2024classical}, allowing us to incorporate their classification results. In conclusion, we summarize these excluded $H$-types in Table \ref{tab:excluded-H-types}, and will omit their consideration in subsequent arguments.


\begin{table}[H]
	\caption{Excluded $H$-types derived in \cite{alavir,alavi2024classical}}
    \label{tab:excluded-H-types}
	\centering
	\begin{tabular}{lll}
		\toprule
		Class & Type of $H$& References\\
		\midrule
		$\mathcal{C}_3$ & $\mathrm{GU}_m(q^t)$  & \cite[Proposition 4.3]{alavir} \\
          $\mathcal{C}_4$ & $\mathrm{GU}_i(q) \otimes \mathrm{GU}_{n/i}(q)$  & \cite[Corollary 2.12]{alavi2024classical} \\
            $\mathcal{C}_5$ & $\mathrm{GU}_n(q_0)$  & \cite[Proposition 4.3]{alavir} \\
            $\mathcal{C}_6$ & $t^{2m}.\mathrm{Sp}_{2m}(t)$  & \cite[Proposition 4.1]{alavir} \\
             $\mathcal{C}_7$ & $\mathrm{GU}_m(q) \wr \mathrm{S}_t$  & \cite[Corollary 2.12]{alavi2024classical} \\
		\bottomrule
	\end{tabular}
\end{table}

\begin{lemma}
    Assume Hypothesis \ref{Hypothesis}. If  $H \in \mathcal{C}_1$, then $n=3$ and $H$ is the stabilizer of a totally singular $1$-dimensional subspace.
\end{lemma}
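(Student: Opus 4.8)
The plan is to split the $\mathcal{C}_1$ maximal subgroups of $X=\mathrm{PSU}_n(q)$ into the two geometric families they comprise: stabilizers of a totally singular subspace (type $P_i$) and stabilizers of a non-degenerate subspace (type $N_i$, that is, $\mathrm{GU}_i(q)\perp\mathrm{GU}_{n-i}(q)$). For each family I would first invoke Table \ref{tab:PSU_conditions} to cut the index $i$ (and, where relevant, $n$) down to a single case, and then either recognize the survivor as the asserted configuration or eliminate it by a divisibility estimate of the kind used throughout Section \ref{s3}.

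For the parabolic family, Table \ref{tab:PSU_conditions} forces $n=3$. Since a parabolic $P_i$ requires $1\le i\le n/2$, the value $n=3$ leaves only $i=1$, so $H$ is the stabilizer of a totally singular $1$-dimensional subspace. This is exactly the conclusion of the lemma, so nothing further is needed in this branch.

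For the non-degenerate family, Table \ref{tab:PSU_conditions} forces $i=1$, so $H$ is of type $\mathrm{GU}_1(q)\perp\mathrm{GU}_{n-1}(q)$, and the task is to show this case cannot occur. I would compute the index $v=|X|/|H_0|=q^{n-1}(q^n-(-1)^n)/(q+1)$, the number of non-degenerate $1$-spaces, and note that $p\mid v$, so $(r^*,p)=1$. By Lemma \ref{some_subdgrees} and Table \ref{degrees}, $G$ has a non-trivial subdegree dividing $s=(q+1)(q^{n-1}-(-1)^{n-1})$; combining Lemma \ref{deisgns_divisibility}(ii) with Lemma \ref{designs_property}(i) then gives $r^*\mid(v-1,s)$. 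Writing $t'=(q^{n-1}-(-1)^{n-1})/(q+1)$, one has $s=(q+1)^2t'$.

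The decisive step is to evaluate $(v-1,s)$ exactly, and here lies the only genuine obstacle. Setting $m=(q^n-(-1)^n)/(q+1)$, a short manipulation using $q^{n-1}=(q+1)t'+(-1)^{n-1}$ yields the factorization $v-1=t'\,w$ with $w=(q+1)m+(-1)^{n-1}q$. Since $w\equiv(-1)^{n-1}q\equiv(-1)^{n}\pmod{q+1}$, the factor $w$ is coprime to $q+1$, whence $(v-1,s)=t'\,(w,(q+1)^2)=t'$. Therefore $r^*\mid t'$, and Lemma \ref{designs_property}(iv) forces $v<(r^*)^2\le t'^2$. This is impossible, because $v/t'^2=q^{n-1}(q^n-(-1)^n)(q+1)/(q^{n-1}-(-1)^{n-1})^2$ is of order $q^2>1$ for all $n\ge3$ and $q\ge2$, so in fact $v>t'^2$. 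What makes the argument work uniformly is spotting the clean factorization $v-1=t'w$ together with the congruence $w\equiv(-1)^n\pmod{q+1}$; once these are in hand the numerical contradiction is immediate, and no case-by-case analysis in $n$ or $q$ is required.
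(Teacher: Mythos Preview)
Your proposal is correct and follows essentially the same approach as the paper: for the parabolic case you invoke Table~\ref{tab:PSU_conditions} to force $n=3$, and for the $N_1$ case you compute $R=(v-1,s)$ with $s=(q+1)(q^{n-1}-(-1)^{n-1})$ and derive a contradiction from $v<R^2$. The only difference is cosmetic: the paper splits into $n$ odd and $n$ even and simply asserts the value of $R$ in each case, whereas you treat both parities at once via the sign $(-1)^n$ and supply the explicit factorization $v-1=t'w$ with $w\equiv(-1)^n\pmod{q+1}$, which justifies $R=t'$ cleanly; this is a nice streamlining but not a genuinely different route.
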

    \begin{proof}
        If $H$ is a $\mathcal{C}_1$ subgroup, then it is either a parabolic subgroup $P_i$ or the stabilizer $N_i$ of a non-singular subspace. Suppose that $H$ is of type $P_i$ for some $i \leq n/2$. From Table \ref{tab:PSU_conditions}, $H$ is of type $P_1$ with $n=3$.
Suppose that $H$ is isomorphic to $N_i$ with $i<n/2$. From Table \ref{tab:PSU_conditions}, we have $i=1$. 
In this case, let  $R=(v-1,s)$, where $s=\left(q+1\right)\left(q^{n-1}-(-1)^{n-1}\right)$ as in Lemma \ref{some_subdgrees}.
       If $n$ is odd, then by \cite[Proposition 4.1.4]{kleidman1990subgroup}, we compute that $v=|X|/|H_0|=q^{n-1}(q^n+1)/(q+1)$ and $R= (q^{n-1}-1)/(q+1)$. It follows from $v< R^2$ that \[q^{2n-1}<q^{n-1}(q^n+1)<(q^{n-1}-1)^2/(q+1)<q^{2n-3},\] a contradiction. Now suppose that $n$ is even. Then $v=q^{n-1}(q^n-1)/(q+1) $ and $R= (q^{n-1}+1)/(q+1)$. So $v< R^2$ implies that
        $q^{n-1}(q^n-1)<(q^{n-1}+1)^2/(q+1)$, which is impossible for $n\geq3$ and $q\geq2$.
        \end{proof}

  \begin{lemma}
      Assume Hypothesis \ref{Hypothesis}. Then  $H \not \in \mathcal{C}_2$. 
  \end{lemma}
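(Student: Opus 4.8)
The plan is to follow the pattern established throughout Section \ref{s3} and the preceding unitary lemmas: assume $H$ is a $\mathcal{C}_2$-subgroup, invoke Table \ref{tab:PSU_conditions} to narrow the type, then derive a contradiction via the inequality $v<(r^*)^2$. By Aschbacher's theorem the $\mathcal{C}_2$-subgroups of $X=\mathrm{PSU}_n(q)$ are of type $\mathrm{GU}_m(q)\wr \mathrm{S}_t$ (with $n=mt$) or of type $\mathrm{GL}_{n/2}(q^2).2$. From Table \ref{tab:PSU_conditions}, the first type forces $m=1$, so $H$ is of type $\mathrm{GU}_1(q)\wr \mathrm{S}_n$, and the second type forces $n=4$. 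I would treat these two surviving configurations separately.

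First I would handle the imprimitive type $\mathrm{GU}_1(q)\wr \mathrm{S}_n$. Here I would read off $|H_0|$ from \cite[Proposition 4.2.9]{kleidman1990subgroup}, namely $|H_0|=d^{-1}(q+1)^{n-1}n!$, and use the subdegrees recorded in Table \ref{degrees}: the subdegree is $n(n-1)(n-2)(q+1)^3/6$ when $q\le 3$ and $n(n-1)(q+1)^2/2$ when $q>3$. Combining $r^*\mid s$ (Lemma \ref{deisgns_divisibility}(ii)) with $r^*\mid v-1$ and the bound $v<(r^*)^2$ should force $n$ and $q$ to be small; the dominant $q^{n(n-1)/2}$ factor in $|X|$ against the merely polynomial-in-$(q+1)$ size of $|H_0|$ makes $v$ far too large compared to $(r^*)^2$ except possibly for a few tiny cases, which I would then eliminate by a direct divisibility check (computing $R=(v-1,s)$ explicitly and verifying $v>R^2$), mirroring the treatment of the analogous $\mathcal{C}_2$-case for $\mathrm{PSL}_n(q)$ in Lemma \ref{C_2}.

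Next I would dispatch the type $\mathrm{GL}_{n/2}(q^2).2$ with $n=4$, i.e. $\mathrm{GL}_2(q^2).2$. I would compute $|H_0|$ from \cite{kleidman1990subgroup}, obtain $v=|X|/|H_0|$ explicitly as a rational function of $q$, and identify the factors of $v$ (certainly $p$, and typically $q-1$ or $q+1$ and a factor of $2$) that must be coprime to $r^*$. After extracting the $p'$-part and applying Lemma \ref{deisgns_divisibility}(i) to bound $r^*$ by a divisor of $|\mathrm{Out}(X)|\cdot|H_0|$ stripped of its common factors with $v$, the inequality $v<(r^*)^2$ should fail outright, just as in Case 3 of Lemma \ref{C_3} and Subcase 3.1 of Lemma \ref{gcd}. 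A greatest-common-divisor simplification of $(v-1, r^*\text{-candidate})$ will likely be the cleanest route.

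The main obstacle I anticipate is the imprimitive case $\mathrm{GU}_1(q)\wr\mathrm{S}_n$ for small $n$ and $q$, where the crude size inequality \eqref{inequality|X|<|Out(X)^2....|} may not immediately close the gap and the values of $v$ and $s$ must be compared by hand. Getting a clean contradiction there requires choosing the right subdegree and carrying out the $\gcd(v-1,s)$ computation carefully, since $r^*$ divides this gcd rather than $s$ itself; the factor $n!$ in $|H_0|$ can inflate the naive bound on $r^*$, so the divisibility refinement coming from $(r^*,p)=1$ and $(r^*, q-1)=1$ (using that $p$ and $q-1$ divide $v$) will be essential to push $v<(r^*)^2$ to a genuine contradiction.
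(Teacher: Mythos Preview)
Your plan is essentially the paper's own proof: split into the two $\mathcal{C}_2$-types, use Table \ref{tab:PSU_conditions} to reduce to $m=1$ (respectively $n=4$), then kill the survivors with the subdegree bound $v<(r^*)^2\le s^2$ and a final $\gcd(v-1,s)$ check. One point to flag: the subdegree entries for $\mathrm{GU}_1(q)\wr\mathrm{S}_n$ in Table \ref{degrees} are stated only for $n\ge 4$, so your argument as written does not cover $n=3$. The paper handles $n=3$ separately, not via a subdegree but via Lemma \ref{deisgns_divisibility}(i): since $|H_0|=6d^{-1}(q+1)^2$, one has $r\mid 2df|H_0|=12f(q+1)^2$, and then $4v\le (r,\lambda)^2 v\le\lambda v<r^2\le 144f^2(q+1)^4$ bounds $q$ to a short finite list, each member of which is eliminated by computing $(v-1,12f(q+1)^2)$. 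Your anticipated ``small $n$, small $q$'' obstacle is exactly this, and the fix is the one just described rather than the subdegree route. For the $\mathrm{GL}_2(q^2).2$ case the paper simply cites the argument of \cite[Proposition 4.3]{alavir} rather than redoing the computation, but your proposed direct attack via $v<(r^*)^2$ and gcd-refinement is the same in spirit and would work.
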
     
  \begin{proof}
  Let $H $ be a $ \mathcal{C}_2$ subgroup. Then two cases arise.

\textbf{Case 1:} Suppose that $H$ is a $\mathcal{C}_2$ subgroup of type $\mathrm{GU}_m(q)\wr \mathrm{S}_t$, where $n=mt$. 
From Table \ref{tab:PSU_conditions}, $m=1$.
 First assume $n=3$. 
Then by \cite[Proposition 4.2.9]{kleidman1990subgroup}, we derive that $|H_0|=6d^{-1}(q+1)^2$ and $v=q^3(q^2-q+1)(q-1)/6$. By Lemma \ref{deisgns_divisibility}(iii), we have  $r$ divides $12f(q+1)^2$. Hence $4v\le(r,\lambda)^2v \leq \lambda v<r^2 \leq 144f^2(q+1)^4$, that is
        $$
        q^3(q^2-q+1)(q-1) < 216f^2(q+1)^4.
        $$
        This inequality holds  when $p=2$ and $2\leq f\leq6$; when $p=3$ and $f\leq3$; when $p=5$ and $f\leq2$; or when $p\in\{7,11,13,17\}$ and $f=1$.
For each  case, we compute $v$ and evaluate the expression $\left(12f(q+1)^2,v-1\right)$. In every instance, $v<(r^*)^2$ leads to a contradiction.
Hence $n \geq 4$. 
From Lemma \ref{some_subdgrees}, $r^*$ divides $s_1$, where $s_1=n(n-1)(q+1)^2/2$ if $q>3$, and $s_1=n(n-1)(n-2)(q+1)^3/6$ if $q\leq3$. We only give the details for the case where $q>3$ and the other case is similar. For $q>3$, we have $
        v=(q+1)^{-n}\cdot|\mathrm{GU}_n(q)|/(n!). 
        $ It follows from \cite[Lemma 4.2]{alavi2015large} that $|\mathrm{GU}_n(q)|\geq q^{n^2}$. As $(q+1)^n\le q^{2n}$ we obtain $q^{n^2-2n}/(n!)<v<(r^*)^2\leq s_1^2$. Hence
$4q^{n^2-2n}<(n!)\cdot n^2(n-1)^2(q+1)^4$, which implies $n=4$, and $q=4$ or $5$. Now $v=q^6(q-1)^2(q^2+1)(q^2-q+1)/24$ and $s_1=6(q+1)^2$. Then we deduce $v>(v-1,s_1)^2$, a contradiction. 

\textbf{Case 2:} Suppose that $H$ is of type $\mathrm{GL}_{n/2}(q^2).2$. According to Table \ref{tab:PSU_conditions}, this implies that $n=4$. The remainder of the proof proceeds similarly to the argument  in \cite[Proposition 4.3]{alavir}.
\end{proof}

\begin{lemma}
    Assume Hypothesis \ref{Hypothesis}. Then $H \not \in \mathcal{C}_5$. 
\end{lemma}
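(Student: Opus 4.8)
The plan is to separate the $\mathcal{C}_5$-subgroups of $G$ into their three natural types and reduce to the two that are not already dispatched. By \cite[Section 4.5]{kleidman1990subgroup}, a $\mathcal{C}_5$-subgroup $H$ of $G$ is of subfield type $\mathrm{GU}_n(q_0)$ with $q=q_0^t$ and $t$ an odd prime, of symplectic type $\mathrm{Sp}_n(q)$ with $n$ even, or of orthogonal type $\mathrm{GO}_n^\epsilon(q)$ with $q$ odd. The first type is already excluded by Table~\ref{tab:excluded-H-types}, so from the outset I would assume $H$ is of symplectic or orthogonal type and treat these exactly as the $\mathcal{C}_8$-subgroups were handled in the linear case (Lemma~\ref{our method}): reading $|H_0|$ off \cite{kleidman1990subgroup}, observing $p\mid v$, and squeezing $r^*$ between a divisibility condition and the inequality $v<(r^*)^2$.

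For the symplectic type I would record $|H_0|$ and $v=|X|/|H_0|$, note $p\mid v$ (since the $p$-part $q^{n^2/4}$ of $|\mathrm{Sp}_n(q)|$ is strictly smaller than the $p$-part $q^{n(n-1)/2}$ of $|X|$), and invoke Lemma~\ref{deisgns_divisibility}(iii) to force $r^*$ coprime to $p$ with $r^*\mid 2df\,|H_0|_{p'}$. Because $|\mathrm{Sp}_n(q)|$ is of order roughly $q^{n^2/2}$, the crude bound \eqref{inequality|X|<|Out(X)^2....|} does not by itself bound $n$, so the finer gcd with $v-1$ is essential. For $n=4$ I would compute $(v-1,\,2df\,|H_0|_{p'})$ explicitly and contradict $v<(r^*)^2$, checking the finitely many surviving small $q$ with Magma \cite{MAGMAbosma1994handbook}; for $n\ge 6$ I would exhibit, by an argument analogous to \cite[Lemma~4.9]{devillers2021flag}, a point $\beta$ for which $X_{\alpha\beta}$ contains $\mathrm{Sp}_{n-4}(q)$, and then apply Lemma~\ref{lem:2.4} to obtain $r^*\mid 2df\,|H_0|/|\mathrm{Sp}_{n-4}(q)|$, a divisor too small to satisfy $v<(r^*)^2$ once the dimension is bounded.

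The orthogonal type $\mathrm{GO}_n^\epsilon(q)$ with $q$ odd is parallel, with $H_0\cong \mathrm{PSO}_n^\epsilon(q).[\,\cdot\,]$. For $n\ge 4$ I would produce, following \cite[Lemma~4.10]{devillers2021flag}, a point $\beta$ with $X_{\alpha\beta}\ge \mathrm{SO}_{n-2}^\epsilon(q)$ and apply Lemma~\ref{lem:2.4} to get $r^*\mid 2df\,|H_0|/|\mathrm{SO}_{n-2}^\epsilon(q)|$; together with $p\mid v$ and $v<(r^*)^2$ this eliminates every case. The base case $n=3$, where $H_0$ is essentially $\mathrm{SO}_3(q)\cong\mathrm{PGL}_2(q)$ and the stabilizer reduction is vacuous, I would settle by computing $v$ and $(v-1,\,2df\,|H_0|_{p'})$ directly and again contradicting $v<(r^*)^2$.

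I expect the main obstacle to be not the asymptotics but the transfer of the two containments $\mathrm{Sp}_{n-4}(q)\le X_{\alpha\beta}$ and $\mathrm{SO}_{n-2}^\epsilon(q)\le X_{\alpha\beta}$ from the linear geometry of \cite{devillers2021flag} to the Hermitian setting, the corresponding unitary versions of which are available through the framework of \cite{alavir,alavi2024classical}, together with the careful accounting of the scalar, diagonal and field contributions to $|H_0|$ and to $|\mathrm{Out}(X)|=2df$ that enter the gcd estimates. The small-dimensional and small-$q$ exceptions, where $v<(r^*)^2$ is only barely violated, are the second delicate point and are where I would rely on Magma.
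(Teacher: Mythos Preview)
Your route differs substantially from the paper's. The paper makes no attempt to transfer the two-point stabilizer containments of \cite[Lemmas~4.9--4.10]{devillers2021flag} to the Hermitian setting. Instead, for both the orthogonal and symplectic $\mathcal{C}_5$-types it writes out $v=|X|/|H_0|$ explicitly and observes that $v$ is divisible not only by $p$ and $2$ but by a whole string of factors $q+1,\,q^3+1,\,q^5+1,\ldots$ coming from the odd-index terms $q^j-(-1)^j$ in $|X|$. Each such factor is therefore coprime to $r^*$, and stripping all of them out of $2df\,|H_0|_{p'}$ cuts the exponent in the divisibility bound for $r^*$ by roughly a factor of two; then $v<(r^*)^2$ forces $n$ into a small explicit range ($n<6$ for orthogonal, $n\le 14$ for symplectic), and the survivors are killed by direct gcd computations. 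This is entirely elementary and needs nothing beyond Lemma~\ref{deisgns_divisibility}.

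Your plan, by contrast, hinges on exhibiting $\beta$ with $X_{\alpha\beta}\ge\mathrm{Sp}_{n-4}(q)$ or $\mathrm{SO}_{n-2}^\epsilon(q)$ for the action of $\mathrm{PSU}_n(q)$ on cosets of a $\mathcal{C}_5$-subgroup, and you yourself flag this transfer as the main obstacle. Your assertion that these containments ``are available through the framework of \cite{alavir,alavi2024classical}'' is not substantiated: those papers eliminate $\mathcal{C}_5$-types via the same inequality~\eqref{inequality|X|<|Out(X)^2....|} and elementary divisibility that the present paper uses (indeed this is exactly why Table~\ref{tab:excluded-H-types} can cite them for the subfield type but not for the symplectic or orthogonal types), and neither establishes a two-point stabilizer lemma of the kind you need. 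The geometry here---$\mathrm{SU}_n(q)$ acting on $\mathbb{F}_q$-structures compatible with the Hermitian form, rather than $\mathrm{SL}_n(q)$ acting on non-degenerate forms---is genuinely different from the linear $\mathcal{C}_8$ situation, and until that lemma is actually proved your argument has a gap at its center. The paper's factor-stripping method sidesteps the issue completely.
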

\begin{proof}
Let $H$ be a $\mathcal{C}_5$ subgroup. From Table \ref{tab:excluded-H-types} we have the remaining two cases.

\textbf{Case 1:} Suppose that $H$ is of type $\mathrm{O}_n^\epsilon(q)$ with $q$ odd and $\epsilon \in \{\circ,\pm\}$. According to \cite[Proposition 4.5.5]{kleidman1990subgroup}, we have $H_0 \cong \mathrm{PSO}_n^\epsilon(q).(2,n)$.

\textbf{Subcase 1.1:} Assume that $n$ is odd and $n\neq3$. In this case, we have $|H_0|=|\mathrm{PSO}_n^\circ(q)|$ and 
\begin{equation}\label{Otype}
    v=d^{-1}q^{(n^2-1)/4} (q^3+1)(q^5+1)\cdots(q^n+1).
\end{equation}
As $q+1<2q$, we obtain that $v>\frac{1}{2}q^{n^2/2+n/2-2}$. By Lemma \ref{deisgns_divisibility}, $r^*$ divides
\begin{equation}\label{O_N.r^*divides}
    2df\cdot q^{(n-1)^2/4} (q^2-1)(q^4-1)\cdots(q^{n-1}-1).
\end{equation}
Observe that $v$ is even and we select some factors of $v$ as follows:
\begin{align*}
    & 2,q,q+1,q^3+1,\cdots,q^{(n-1)/2}+1  \text{, \quad when } (n-1)/2 \text{ is odd.}\\
    &  2,q,q+1,q^3+1,\cdots,q^{(n-3)/2}+1 \text{, \quad when } (n-1)/2 \text{ is even.}
\end{align*}
Since $r^* \mid (v-1)$, we deduce that $r^*$ is coprime to each of the above factors of $v$. Consequently, simplifying (\ref{O_N.r^*divides}) yields that $r^*$ divides
\begin{align*}
   & f \cdot \frac{(q^2-1)(q^4-1)\cdots(q^{n-1}-1)}{(q+1)(q^3+1)\cdots(q^{(n-1)/2}+1)} \leq f\cdot q^{\frac{3n^2}{16}-\frac{n}{8}-\frac{5}{16}} \text{, \quad when } (n-1)/2 \text{ is odd};\\
   & f \cdot \frac{(q^2-1)(q^4-1)\cdots(q^{n-1}-1)}{(q+1)(q^3+1)\cdots(q^{(n-3)/2}+1)}\leq f\cdot q^{\frac{3n^2}{16}+\frac{n}{8}-\frac{5}{16}}   \text{, \quad when } (n-1)/2 \text{ is even}.
\end{align*}
Combining these two cases with $v<(r^*)^2$, we obtain
$$
v<(r^*)^2 \le f^2 q^{\frac{3n^2}{8}+\frac{n}{4}-\frac{5}{8}}.
$$
Therefore, by $v>\frac{1}{2}q^{n^2/2+n/2-2}$ and $2f^2<q^2$, we derive that $n^2+2n-27<0$. However, this inequality has no valid $n$.

\textbf{Subcase 1.2:} $n=3$. Then $v=d^{-1}q^2(q^3+1)$ where $d=(q+1,3)$. It follows from \eqref{O_N.r^*divides} that $r^*$ divides $2df \cdot q(q^2-1)$. If $d=1$, then $v$ is even and $p \mid v$. Then $r^*$ divides $f(v-1,q^2-1)=f$, and $v>(r^*)^2$, a contradiction. If $d=3$, then $r^*$ divides $\left(3(v-1),6f(q^2-1)\right)$. Since $\left(3(v-1),q^2-1\right)\mid3$, we deduce that $r^* \mid 18f$. Hence $v<18^2 f^2$, which cannot hold for any $q$ odd satisfying $(3,q+1)=3$.

\textbf{Subcase 1.3:} $n$ is even. Then $H_0 \cong \mathrm{PSO}_n^\pm(q).2 $, and so 
\begin{equation}\label{v=}
    v= d^{-1}q^{n^2/4}(q^{n/2}\pm 1)(q^3+1)(q^5+1)\cdots(q^{n-1}+1).
\end{equation}
As $q^{n/2}\pm1 > \frac{1}{2}q^{n/2}$ and $q+1<2q$, we deduce that $v>\frac{1}{4}q^{n^2/2+n/2-2}$.
It follows from Lemma \ref{deisgns_divisibility}(i) that $r^*$ divides 
\begin{equation}\label{PSO_typer^*divides}
    2df q^{n(n-2)/4}\cdot (q^{n/2}\mp1)(q^2-1)(q^4-1)\cdots (q^{n-2}-1).
\end{equation}
Suppose that $n\geq6$. Then $v$ is even and we select the following factors of $v$:
\begin{align*}
    &2,q,q+1,q^3+1, \cdots, q^{(n-2)/2}+1  \text{, \quad when } (n-2)/2 \text{ is odd},\\
    &2,q,q+1,q^3+1, \cdots, q^{(n-4)/2}+1  \text{, \quad when } (n-2)/2 \text{ is even},
\end{align*}
which are all coprime to $r^*$ as $r^* \mid (v-1)$. With the bound $q^{n/2} \mp 1 \leq 2q^{n/2}$, (\ref{PSO_typer^*divides}) simplifies to show that $r^*$ divides
\begin{align*}
   & f \cdot \frac{(q^{n/2}\mp1)(q^2-1)(q^4-1)\cdots(q^{n-2}-1)}{(q+1)(q^3+1)\cdots(q^{(n-2)/2}+1)}\leq2f\cdot q^{\frac{3n^2}{16}}   \text{, \quad when } (n-2)/2 \text{ is odd};\\
   & f \cdot \frac{(q^{n/2}\mp1)(q^2-1)(q^4-1)\cdots(q^{n-2}-1)}{(q+1)(q^3+1)\cdots(q^{(n-4)/2}+1)} \leq 2f\cdot q^{\frac{3n^2}{16}+\frac{n}{4}-\frac{1}{4}} \text{, \quad when } (n-2)/2 \text{ is even.}
\end{align*}
Combining these two cases we derive that $r^* \leq 2f\cdot q^{3n^2/16+n/4-1/4}$. Since $v<(r^*)^2$ and $16f^2\le q^3$ when $q$ is odd, we deduce $n^2<36$ and this inequality does not hold for $n\ge6$. Thus $n=4$. Now $v=q^4(q^2\pm1)(q^3+1)/(q+1,4)$. By \eqref{PSO_typer^*divides}, we have that $r^*$ divides \[2f(q+1,4)\cdot q^2(q^2\mp1)(q^2-1).\]
Observe that $v$ is even and divisible by $p$, so $r^*$  divides 
\[f\cdot (q^2\mp1)(q^2-1)/4<f\cdot q^4/2.\]
Therefore we obtain $v <f^2 q^8/4$. On the other hand, since $v>q^9/8$, it follows that $q<2f^2$, which is impossible for odd $q$.

\textbf{Case 2:} Suppose that $H$ is of type $\mathrm{Sp}_n(q)$, where $n$ is even. Following a similar strategy as in \textbf{Subcase 1.3}, we consider the following factors of $v$ when $n\ge 6$:
\begin{align*}
    &2,\, q,\, q+1,\, q^3+1,\, \ldots,\, q^{n/2-1}+1 \text{, \quad when } n/2 \text{ is even},\\
    &2,\, q,\, q+1,\, q^3+1,\, \ldots,\, q^{n/2}+1  \text{,\quad \quad when } n/2 \text{ is odd},
\end{align*}
each of which is coprime to $r^*$. This allows us to derive that $r^*  \leq f\cdot q^{\frac{3n^2}{16}+\frac{n}{2}}$. Then inequality $v < (r^*)^2$ yields $q^{n^2/2-n/2-2}<4f^2\cdot q^{3n^2/8+n}<q^{3n^2/8+n+2}$, which implies $n\leq14$. As the detailed reasoning for excluding the remaining cases is analogous to that in \textbf{Subcase 1.3}, we omit  the details here.
\end{proof}

\begin{lemma}
    Assume Hypothesis \ref{Hypothesis}. Then $H\notin\mathcal{S}$.
\end{lemma}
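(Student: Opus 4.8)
The plan is to mirror the argument used for the linear case in Lemma \ref{S}. The starting point is Lemma \ref{deisgns_divisibility}(iv), which gives $\lambda|G| < |H|^3$ and hence $|X| \leq |G| < |H|^3$. Since $X = \mathrm{PSU}_n(q)$ satisfies $|X| > q^{n^2-2}$, and since for an $\mathcal{S}$-subgroup the order bound $|H| < q^{3n}$ from \cite[Theorem 4.1]{liebeck1985orders} applies, I would first deduce $q^{n^2-2} < q^{9n}$ and therefore $n \leq 9$. Then, invoking \cite[Corollary 4.3]{liebeck1985orders}, either $n = x(x-1)/2$ for some integer $x$ or the sharper bound $|H| < q^{2n+4}$ holds; in either case this forces $n \leq 7$.

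Next, for each $n$ with $3 \leq n \leq 7$, I would read off the finitely many possibilities for $H_0$ from the tables in \cite[Chapter 8]{bray2013maximal}. To cut the list down, I would apply the inequality \eqref{inequality|X|<|Out(X)^2....|} of Lemma \ref{lemma|X|<|Out|^2|H_0|....}, which together with $|\mathrm{Out}(X)| = 2df$ and $d \leq n$ yields $q^{n^2-2} < |X| < 4d^2 f^2 |H_0|^3$ and hence an explicit upper bound on $q$ of the form $q < (4f^2 n^2 |H_0|^3)^{1/(n^2-2)}$. Combining this with the congruence conditions on $q$ imposed by the existence of each $H_0$ as a subgroup of the relevant unitary group (again from \cite{bray2013maximal}), only finitely many pairs $(X, H_0)$ survive, and I would collect these in a table analogous to Table \ref{tab:S_subgroup0}.

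For each surviving pair, I would compute $v = |X|/|H_0|$ and test the necessary condition $v < \left(v-1, |\mathrm{Out}(X)| \cdot |H_0|\right)^2$ coming from Lemma \ref{deisgns_divisibility}(ii), using $r^* \mid (v-1)$ together with $r^* \mid |\mathrm{Out}(X)| \cdot |H_0|$. I expect this gcd test to eliminate the bulk of the cases outright, leaving at most a handful of exceptional pairs for which the crude inequality is satisfied. For any such exceptional pair I would carry out a finer analysis as in Lemma \ref{C_3}: pin down $r^*$ exactly, use $r^*(k-1) = \lambda^*(v-1)$ together with $bk = vr$ and the incompleteness of $\mathcal{D}$ to constrain $k$, $\lambda^*$ and $(r,\lambda)$, and either reach an integrality contradiction or invoke a direct Magma \cite{MAGMAbosma1994handbook} search to confirm that no design with $\lambda \geq (r,\lambda)^2 > 1$ arises.

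The main obstacle I anticipate is not any single step but the bookkeeping in the middle stage: assembling the correct list of $\mathcal{S}$-subgroups for $\mathrm{PSU}_n(q)$ with their precise field-of-definition conditions, and ensuring the estimate for $|H_0|$ is sharp enough that the gcd test closes every case. The unitary tables contain more irregular entries than the linear ones, so the few pairs that survive the order bound will likely demand the individualized, Magma-assisted treatment rather than a uniform argument.
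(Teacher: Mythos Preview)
Your overall strategy matches the paper's: bound $n$ via Liebeck, enumerate the $\mathcal{S}$-subgroups from published tables, cut down using the inequality of Lemma~\ref{lemma|X|<|Out|^2|H_0|....}, and kill the survivors with the gcd test and Magma.

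The gap is in the bound on $n$. You carry \cite[Corollary~4.3]{liebeck1985orders} over verbatim from the linear case to conclude that either $n=x(x-1)/2$ or $|H|<q^{2n+4}$, and hence $n\le 7$. That corollary, however, is tailored to the linear groups (the exceptional large $\mathcal{S}$-subgroups there are alternating groups acting on the fully deleted permutation module, which is where the condition $n=x(x-1)/2$ comes from); for $\mathrm{PSU}_n(q)$ the list of large exceptional $\mathcal{S}$-subgroups is different and the refinement does not transfer in that form. The paper's own proof, running the ``analogous argument'', obtains only $n\le 13$, and its Table~\ref{tab:S_subgroup} in fact contains the pair $(\mathrm{PSU}_9(2),\mathrm{J}_3)$ --- a case your range $3\le n\le 7$ would skip entirely. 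Moreover, since \cite{bray2013maximal} only tabulates up to dimension $12$, the paper also needs \cite[Table~11.0.4]{schroder2015maximal} for $n=13$. So you must either justify a genuinely unitary version of the Corollary~4.3 refinement, or work with a larger range of $n$ and extend your table search accordingly; the rest of your plan is sound once this is fixed.
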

\begin{proof}
Let $H$ be an $\mathcal{S}$ subgroup. 
Following an analogous argument to Lemma \ref{S}, we deduce $n\leq13$. For $n \leq 12$, the possibilities for $H_0$ can be read off from the tables in \cite[Chapter 8]{bray2013maximal}. For the case $n=13$, the possible $H_0$ are listed in \cite[Table 11.0.4]{schroder2015maximal}. Combining Lemma \ref{lemma|X|<|Out|^2|H_0|....} with the inequality $|X|>q^{n^2-3}$ in \cite[Lemma 4.2]{alavi2015large}, we have
\begin{equation}\label{s_subgroup}
   q^{n^2-3} < 4d^2f^2\cdot |H_0|\cdot|H_0|^2_{p'}<4n^2f^2\cdot |H_0|^3.
\end{equation}
 Applying (\ref{s_subgroup}) and the conditions on $q$ in \cite{bray2013maximal,schroder2015maximal}, we determine   possible $q$-values, and list all  pairs $(X,H_0)$  in Table \ref{tab:S_subgroup}. 
 For each of these cases, we compute $v$ and $|\mathrm{Out}(X)|\cdot|H_0|$. Setting $R=\left(v-1,|\mathrm{Out}(X)|\cdot|H_0|\right)$,  the condition   $v<R^2$  rules out all cases except Line 1 with $q=3$ in Table \ref{tab:S_subgroup}. In this case, $v=36$ and $r^* \mid 7$. It follows from $v<(r^*)^2$ that $r^*=7$. Then we have $r=7(r,\lambda)$. Since $r$ divides $|\mathrm{Out}(X)|\cdot|H_0|=336$, we have $(r,\lambda)\mid48$. By $\lambda^*(v-1)=r^*(k-1)$, we have $k=5\lambda^*+1$ and
\[b=vr/k=252(r,\lambda)/(5\lambda^*+1)\in\mathbb{Z}.\]
Combining this with $\lambda v<r^2$ we deduce that $$(v,b,r,k,\lambda)=(36,36,21,21,12) \text{ or } (36,48,28,21,16).$$ However, Magma \cite{MAGMAbosma1994handbook}  verifies that no non-trivial flag-transitive $2$-designs exist for  these parameters.
 \end{proof}

\begin{table}[H]
\caption{\text{The pairs $(X,H_0)$}}
\label{tab:S_subgroup}
\centering
\begin{tabular}{llll}
\toprule
Line & $X$ & $H_0$ & \text{Possible } $q$   \text{values }\\
\midrule
        1  & $\mathrm{PSU}_3(q)$  & $\mathrm{PSL}_2(7)$   & $q=3,5,13,17,19$ \\
        2  &             & $\mathrm{A}_6$         &  $q=11,29$ \\
        3  &             & ${\mathrm{A}_6}^\cdot 2_3 $     & $q=5$ \\
        4  &             & $\mathrm{A}_7$                  & $q=5$ \\
\midrule
         5& $\mathrm{PSU}_4(q)$  & $\mathrm{A}_7$  & $q=3,5$\\
         6&   & $\mathrm{PSL}_3(4)$ & $q=3$\\
       7 &  & $\mathrm{PSU}_4(2)$ &    $q=5,11$\\
\midrule

         8&  $\mathrm{PSU}_5(q)$  & $\mathrm{PSL}_2(11)$     &  $q=2$ \\

\midrule
      9  &$\mathrm{PSU}_6(q)$& $\mathrm{M}_{22}$    & $q=2$ \\
      10  && $\mathrm{PSU}_4(3):2_2$ & $q=2$\\



       11 &  $\mathrm{PSU}_9(q)$ & $\mathrm{J}_3$      & $q=2$  \\
         

      


\bottomrule
\end{tabular}
\end{table}

\section{Acknowledgements}
This work was supported by the National Natural Science Foundation of China under Grant No. 12501467.



\end{document}